\documentclass[a4paper,11pt]{amsart}
\renewcommand{\uppercasenonmath}[1]

\allowdisplaybreaks
\usepackage[T1]{fontenc}
\usepackage[utf8]{inputenc}

\usepackage{amsmath}
\usepackage{amsthm}
\usepackage{amssymb}
\usepackage{amstext}
\usepackage{mathtools}

\usepackage{mathrsfs}
\usepackage{bm}
\usepackage{shuffle}
\usepackage[all]{xy}
\usepackage{ytableau}
\ytableausetup{boxsize=normal,aligntableaux=center}

\usepackage{xcolor} 
\usepackage{graphicx}
\usepackage{tikz}

\usepackage{amsrefs}
\usepackage{ascmac}
\usepackage{comment}
\usepackage{here}
\usepackage{time}
\usepackage{fullpage} 

\usepackage{hyperref}
\hypersetup{
  colorlinks=true,
  linkcolor=blue,
  citecolor=blue,
  filecolor=blue,
  urlcolor=blue
}

\usepackage[capitalize,nameinlink,noabbrev,nosort]{cleveref}

\DeclareFontEncoding{OT2}{}{}
\DeclareFontSubstitution{OT2}{cmr}{m}{l}
\DeclareFontFamily{OT2}{cmr}{\hyphenchar\font45 }
\DeclareFontShape{OT2}{cmr}{m}{l}{%
  <5><6><7><8><9>gen*wncyr%
  <10><10.95><12><14.4><17.28><20.74><24.88>wncyr10}{}
\DeclareMathAlphabet{\mathcyr}{OT2}{cmr}{m}{l}
\DeclareMathAlphabet{\mathcyb}{OT2}{cmr}{b}{l}
\SetMathAlphabet{\mathcyr}{bold}{OT2}{cmr}{b}{l}

\makeatletter
\@namedef{subjclassname@2020}{%
  \textup{2020} Mathematics Subject Classification}
\makeatother

\newtheorem{theoremcounter}{Theorem Counter}[section]

\theoremstyle{definition}

\newtheorem{remark}[theoremcounter]{Remark}
\newtheorem{example}[theoremcounter]{Example}

\theoremstyle{plain}
\newtheorem{lemma}[theoremcounter]{Lemma}
\newtheorem{proposition}[theoremcounter]{Proposition}
\newtheorem{corollary}[theoremcounter]{Corollary}

\newtheorem{theorem}[theoremcounter]{Theorem}

\theoremstyle{definition}

\theoremstyle{plain}

\numberwithin{equation}{section}

\DeclareMathOperator{\ReNew}{Re}
\renewcommand{\Re}{\ReNew}

\definecolor{mizuiro}{RGB}{0,160,233}   
\definecolor{skyblue}{RGB}{86,180,233}  
\definecolor{darkgreen}{RGB}{0,128,0}   
\definecolor{navyblue}{RGB}{0,0,128}    

\begin{document}
\hypersetup{allcolors=black}

\title{Moment representations for generalized Lehmer polynomials}

\author{Karin Ikeda}
\address[Karin Ikeda]{Institute of Mathematics for Industry, Kyushu University, Motooka 744, Nishi-ku, Fukuoka 819-0395, Japan}
\email{ikeda@imi.kyushu-u.ac.jp}
\author{Hideki Murahara}
\address[Hideki Murahara]{The University of Kitakyushu,  4-2-1 Kitagata, Kokuraminami-ku, Kitakyushu, Fukuoka, 802-8577, Japan}
\email{hmurahara@mathformula.page}

\subjclass[2020]{Primary 11B68; Secondary 05A19, 60E05, 44A60}
\keywords{Central binomial series, generalized Lehmer polynomial, negative binomial distribution, moment representation, poly-Bernoulli number, Stieltjes moment sequence}

\begin{abstract}
We represent normalized generalized Lehmer polynomials as moments of a random variable constructed from two independent negative binomial variables and an independent uniform variable. 
An integral form of their known exponential generating function yields mixtures of uniform distributions and convergent double series. 
We recover Eulerian polynomial identities and give a probabilistic interpretation of anti-diagonal sums of poly-Bernoulli numbers. 
We also prove moment determinacy, characterize the Stieltjes parameter range, derive Hankel total non-negativity and log-convexity, and identify the boundary distributions.
\end{abstract}

\maketitle

\section{Introduction}\label{sec:introduction}
\subsection{Central binomial series}
The central binomial series 
\[
\zeta_{CB}(s)\coloneqq\sum_{m=1}^{\infty}\frac{1}{\binom{2m}{m}m^s}
\]
converges absolutely and locally uniformly on $\mathbb C$, and hence defines an entire function.
Its values at non-positive integers lie in $\mathbb Q+\mathbb Q\pi/\sqrt3$. 
Lehmer \cite{Lehmer1985} introduced the polynomials $p_n(z)$ and $q_n(z)$ defined by the initial values $p_{-1}(z)=0$ and $q_{-1}(z)=1$ and the recurrences, valid for $n\ge-1$,
\begin{align}
q_{n+1}(z)
&=(2(n+1)z+1)q_n(z)+2z(1-z)q_n'(z),\label{eq:rec-q}\\
p_{n+1}(z)
&=2(nz+1)p_n(z)
+2z(1-z)p_n'(z)+q_n(z),\nonumber
\end{align}
and showed that, for every integer $k\ge1$,
\[
\zeta_{CB}(1-k)=\left(\frac23\right)^k
\left\{\frac12p_{k-1}\left(\frac14\right)
+\frac{\pi}{3\sqrt3}q_{k-1}\left(\frac14\right)\right\}.
\]
Borwein, Broadhurst, and Kamnitzer \cite{BorweinBroadhurstKamnitzer2001} showed, through log-sine integrals, that for every integer $k\ge2$ the value $\zeta_{CB}(k)$ is a $\mathbb Q$-linear combination of multiple zeta values, multiple Clausen values, and multiple Glaisher values. 
B\'enyi and Matsusaka \cite{BenyiMatsusaka2023}, Theorems~2.4 and~3.8, express Lehmer's polynomials in terms of bivariate Eulerian polynomials. 
Their Theorem~3.6 proves Stephan's observation
\begin{align}\label{eq:stephan}
\left(\frac23\right)^n p_n\left(\frac14\right)
=\sum_{k=0}^n B_{n-k}^{(-k)}\qquad(n\ge0),
\end{align}
where the poly-Bernoulli numbers $B_m^{(r)}$, $r\in\mathbb Z$, are defined by
\[
\sum_{m=0}^{\infty}B_m^{(r)}\frac{t^m}{m!}
=\sum_{j=1}^{\infty}\frac{(1-e^{-t})^{j-1}}{j^r}.
\]
Matsusaka \cite{Matsusaka2026} shows that, up to the factor $1/3$, these anti-diagonal sums are the values of a shifted log-sine integral at non-positive integers. 

\subsection{Generalized Lehmer polynomials}
Ikeda and Kadono \cite{IkedaKadono2026} introduce a Hurwitz--Lerch type central binomial series and polynomials $p_n(a,z)$ with $p_n(1,z)=p_n(z)$; we call them generalized Lehmer polynomials. Our variable $z$ corresponds to their polynomial variable $x$, which is the square of the third variable in their series. The initial value is $p_{-1}(a,z)=0$, and the defining recurrence, valid for $n\ge-1$, is
\begin{align}\label{eq:rec-p-general}
p_{n+1}(a,z)
=2((n+1-a)z+a)p_n(a,z)
+2z(1-z)\partial_zp_n(a,z)+q_n(z),
\end{align}
with $q_n(z)$ as above.
They obtain an Eulerian polynomial expansion and a recurrence for
$\alpha_n(a)\coloneqq (2/3)^np_n(a,1/4)$.
In this paper, we represent $p_n(a,z)/(n+1)$ and $\alpha_n(a)/(n+1)$ as moments; at $a=1$, this interprets the known anti-diagonal sums probabilistically.

\subsection{Moment representations}
Probabilistic representations of classical polynomials include Sun's complex moment formulas for Bernoulli and Euler polynomials and gamma moment formulas for Gegenbauer polynomials \cite{Sun2007}. 
Adell and Lekuona \cite{AdellLekuona2019} use uniform and exponential variables to represent Stirling numbers, while Adell and B\'enyi \cite{AdellBenyi2024} relate potential polynomials to moments of sums of independent variables. For Eulerian polynomials, shifted negative binomial moments appear in Charalambides \cite{Charalambides1991}; 
Barry \cite{Barry2018} studies moment representations and Hankel transforms using exponential Riordan arrays. 
Liang, Mu, and Wang \cite{LiangMuWang2016} use total positivity to obtain sufficient conditions for Catalan-like numbers to form Stieltjes moment sequences. A Stieltjes moment sequence is a real sequence $(m_n)_{n\ge0}$ with $m_n=\int_{[0,\infty)}x^n\,d\mu(x)$ for every $n\ge0$, for some finite positive Borel measure $\mu$ on $[0,\infty)$. 
Our representation combines two negative binomial variables with an independent uniform variable, and it gives a necessary and sufficient condition for the normalized generalized Lehmer polynomials to form a Stieltjes moment sequence.

\subsection{Main results}
Set
\[
\mathcal P(a,z;t)\coloneqq\sum_{n=0}^{\infty}p_n(a,z)\frac{t^{n+1}}{(n+1)!},
\qquad
\mathcal Q(z;t)
\coloneqq\sum_{n=0}^{\infty}q_{n-1}(z)\frac{t^n}{n!}.
\]
We regard $\mathcal P$ and $\mathcal Q$ primarily as formal series in $\mathbb Q[a,z][[t]]$, adjoining $(1-z)^{-1}$ when necessary; 
identities between polynomial coefficients then extend to $z=1$. 
For fixed real parameters with $z\ne1$, \cref{thm:generating} below identifies the series with analytic germs at $t=0$. 
Identities stated on larger intervals are understood through analytic continuation and do not assert Taylor convergence there.

We also fix our convention for negative binomial variables. 
For $y>0$ and $0\le z<1$, a negative binomial variable $N$ of shape $y$ and success probability $1-z$ is a random variable with
\[
\mathbb P(N=m)=\frac{(y)_m}{m!}(1-z)^yz^m
\qquad(m\ge0),
\]
where $(b)_0=1$ and $(b)_m=b(b+1)\cdots(b+m-1)$ for $m\ge1$. 
At $z=0$, this means that $N=0$ almost surely. 
The normalization by $n+1$ in the following theorem gives a moment sequence throughout the stated parameter range.

\begin{theorem}\label{thm:moment}
Let $a\in\mathbb R$ and $0\le z<1$. 
Let $N_1,N_2$ be independent negative binomial variables of shape $1/2$ and success probability $1-z$, and let $U$ be uniform on $[0,1]$ and independent of $N_1,N_2$. 
Define
\[
Y_{a,z}\coloneqq (1-z)\{2N_1+1+U(2N_2+2a-1)\}.
\]
Then, for every integer $n\ge0$,
\[
\mathbb E[Y_{a,z}^n]=\frac{p_n(a,z)}{n+1}.
\]
More precisely,
\[
\mathbb E[e^{tY_{a,z}}]=\frac{\mathcal P(a,z;t)}{t}
\quad\text{for }t\in I_z,
\qquad
I_z=\begin{cases}
\mathbb R,&z=0,\\
\displaystyle
\left(-\infty,\frac{\log(1/z)}{2(1-z)}\right),&0<z<1.
\end{cases}
\]
Here the quotient $\mathcal P(a,z;t)/t$ is defined at $t=0$ by continuity and has value $1$. On $I_z$ the function $\mathcal P(a,z;t)$ denotes the real analytic continuation of its germ at $t=0$, given by the integral formula in \cref{thm:generating} below.
\end{theorem}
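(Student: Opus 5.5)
The plan is to compute the moment generating function of $Y_{a,z}$ in closed form, check that this closed form satisfies the differential equation that the recurrence \eqref{eq:rec-p-general} imposes on $\mathcal P$, and then read off the moments as Taylor coefficients.

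\textbf{Step 1 (the moment generating function).} Put $s=(1-z)t$ and $W=2N_2+2a-1$. The probability generating function of a negative binomial variable of shape $1/2$ gives
\[
\mathbb E[e^{uN_i}]=\Bigl(\frac{1-z}{1-ze^{u}}\Bigr)^{1/2}\qquad(ze^{u}<1).
\]
By independence, the first factor is
\[
\mathbb E[e^{s(2N_1+1)}]=e^{s}(1-z)^{1/2}(1-ze^{2s})^{-1/2}.
\]
For the second factor I would condition on $N_2$ and write $\mathbb E[e^{sUW}\mid W]=\int_0^1 e^{svW}\,dv$. Tonelli's theorem applies since everything is positive, and the substitution $w=sv$ then gives
\[
t\,\mathbb E[e^{tY_{a,z}}]=\frac{(1-z)\,e^{s}}{\sqrt{1-ze^{2s}}}\int_0^{s}\frac{e^{(2a-1)w}}{\sqrt{1-ze^{2w}}}\,dw=:F(a,z;t).
\]
Finiteness holds exactly when $ze^{2s}<1$, because for $s>0$ the integrand is largest at $w=s$. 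This is the condition $t<\log(1/z)/(2(1-z))$, and it holds for all real $t$ when $z=0$. That is precisely $I_z$, and the division by $t$ is harmless because the integral is $O(s)$ as $s\to0$.

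\textbf{Step 2 (identifying $F$ with $\mathcal P$).} Multiplying \eqref{eq:rec-p-general} by $t^{n+1}/(n+1)!$ and summing over $n$ turns it into a linear PDE for $\mathcal P(a,z;t)$:
\[
\partial_t\mathcal P=2z\,t\,\partial_t\mathcal P+2a(1-z)\mathcal P+2z(1-z)\partial_z\mathcal P+\mathcal Q(z;t).
\]
Here the factor $n+1$ becomes $t\partial_t$, and the summed $q_n$-terms give $\mathcal Q$ up to the index shift, which I will fix during the computation. The initial condition is $\mathcal P(a,z;0)=0$. In the same way, \eqref{eq:rec-q} gives a PDE for $\mathcal Q$ with $\mathcal Q(z;0)=q_{-1}=1$. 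I would first solve for $\mathcal Q$: the natural guess is the prefactor $e^{s}(1-z)^{c}(1-ze^{2s})^{-1/2}$, or its $t$-derivative, with the exponent $c$ to be determined by substitution. Next I would verify directly that $F$ satisfies the $\mathcal P$-equation.

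In that verification, the identity
\[
(1-2zt)\,\partial_t-2z(1-z)\,\partial_z
\]
acting on $s=(1-z)t$ and on $ze^{2s}$ should produce cancellations. Acting on the upper limit of the integral yields the inhomogeneous term $\mathcal Q$. Uniqueness is formal: the PDE determines the coefficient of $t^{n+1}$ recursively from the lower ones, so any formal solution in $\mathbb Q[a,z,(1-z)^{-1}][[t]]$ with zero constant term coincides with $\mathcal P$. Hence the Taylor series of $F$ at $t=0$ is $\mathcal P$, and $F$ is the real analytic continuation of $\mathcal P$ along $I_z$.

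\textbf{Step 3 (the moments).} Since $I_z$ contains a neighbourhood of $0$, the moment generating function of $Y_{a,z}$ is analytic there, and its Taylor coefficients are $\mathbb E[Y_{a,z}^n]/n!$. This holds even though $Y_{a,z}$ may be negative when $a<1/2$, since exponential moments exist on both sides of $0$. Comparing with $\mathcal P/t=\sum_n \frac{p_n(a,z)}{n+1}\frac{t^n}{n!}$ gives $\mathbb E[Y_{a,z}^n]=p_n(a,z)/(n+1)$. The case $z=0$ is covered by the same formulas, with $N_1=N_2=0$ almost surely.

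The main obstacle is Step 2. The closed form for $\mathcal Q$ must be pinned down exactly, including the exponent of $1-z$ and the index shift in $\mathcal Q$, and the verification that $F$ solves the PDE involves differentiating under the integral in $z$. For that differentiation I would change variables to $x=ze^{2w}$ so that the $z$-dependence moves into the limits, which should make the $\partial_z$ computation clean.
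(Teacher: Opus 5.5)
Your plan is the paper's argument run in the opposite order. The paper first proves the integral formula of \cref{thm:generating}, using the operator $D=(1-2zt)\partial_t-2z(1-z)\partial_z$ and the formal uniqueness in \cref{lem:pde}. It then reads that formula as $t\,\mathbb E[e^{tY_{a,z}}]$ through the substitution $s=2tu$, independence and Tonelli. You compute the moment generating function first and check it against the same equation.

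As written, though, Step~1 contains an algebra slip that makes Step~2 fail. Writing $v$ for the integration variable, conditioning on $U$ gives
\[
\mathbb E[e^{tY_{a,z}}]=\frac{(1-z)\,e^{s}}{s\sqrt{1-ze^{2s}}}\int_0^{s}\frac{e^{(2a-1)v}}{\sqrt{1-ze^{2v}}}\,dv .
\]
Since $t/s=1/(1-z)$, this becomes
\[
t\,\mathbb E[e^{tY_{a,z}}]=\frac{e^{s}}{\sqrt{1-ze^{2s}}}\int_0^{s}\frac{e^{(2a-1)v}}{\sqrt{1-ze^{2v}}}\,dv .
\]
Your $F$ is $s\,\mathbb E[e^{tY_{a,z}}]$ rather than $t\,\mathbb E[e^{tY_{a,z}}]$. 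It gives $F/t\to1-z$ at $t=0$ and $\partial_tF|_{t=0}=1-z\neq1=\mathcal Q(z;0)$. Because $D\bigl((1-z)G\bigr)=(1-z)DG+2z(1-z)G$, it cannot satisfy $D\mathcal P=2a(1-z)\mathcal P+\mathcal Q$. The corrected expression is exactly the first display in the proof of \cref{prop:beta}, i.e.\ the formula of \cref{thm:generating} after rescaling.

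With this correction your Step~2 goes through, and the substitution $x=ze^{2v}$ gives a cleaner verification than the paper's integration by parts. One has $Ds=1-z$, $Dz=-2z(1-z)$ and $D(ze^{2s})=0$. So in the form
\[
F=\frac{e^{s}z^{1/2-a}}{2\sqrt{1-ze^{2s}}}\int_z^{ze^{2s}}x^{a-3/2}(1-x)^{-1/2}\,dx
\]
$D$ hits only $e^{s}$, $z^{1/2-a}$ and the lower limit. This gives at once $DF=2a(1-z)F+e^{s}(1-z)^{1/2}(1-ze^{2s})^{-1/2}$. The inhomogeneous term comes from the lower limit $z$, not the upper one as you expected; $D$ annihilates the upper limit.

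For your ansatz $\mathcal Q_c=e^{s}(1-z)^{c}(1-ze^{2s})^{-1/2}$, the same identities give $D\mathcal Q_c=\bigl(1+(2c-1)z\bigr)\mathcal Q_c$. Hence $c=1/2$, which $\mathcal Q(z;0)=1$ also forces. No further index shift is needed, because summing over $n\ge-1$ already yields $\mathcal Q(z;t)=\sum_{n\ge0}q_{n-1}(z)t^n/n!$.

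The $x$-form needs $z>0$. On the line $z=0$ the operator $D$ reduces to $\partial_t$, the equation becomes $\partial_t\mathcal P=2a\mathcal P+e^{t}$, and the check is immediate. Alternatively, extend from $z>0$ by analyticity in $z$. Step~3 is the same as in the paper.
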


We prove \cref{thm:moment} in \cref{sec:proof}, where we also give a mixture representation of the law of $Y_{a,z}$ and a double-series representation of $p_n(a,z)$. 
The following integral form of the known generating function, which we derive from the polynomial recurrences, provides a direct route to \cref{thm:moment}.

\begin{theorem}\label{thm:generating}
Let $a\in\mathbb R$ and $z\in\mathbb R\setminus\{1\}$. Define
\[
F_z(s)\coloneqq\left(\frac{1-z}{e^{(z-1)s}-z}\right)^{1/2}.
\]
For $t$ in a neighborhood of $0$, we have
\[
\mathcal Q(z;t)=F_z(2t),\qquad
\mathcal P(a,z;t)=\frac12F_z(2t)
\int_0^{2t}e^{(a-1)(1-z)s}F_z(s)\,ds.
\]
The square root is the branch with value $1$ at the origin.
\end{theorem}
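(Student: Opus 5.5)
The plan is to convert both recurrences into first-order linear PDEs for the generating functions in the variables $(z,t)$, and then check that the proposed closed forms satisfy these PDEs with the correct initial data at $t=0$. Uniqueness of formal power series solutions will then give the identities as formal series. For $z\ne1$ the right-hand sides are analytic near $t=0$, so the identities also hold as analytic germs. Throughout I work in $\mathbb Q[a,z,(1-z)^{-1}][[t]]$.

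\emph{Step 1: the series $\mathcal Q$.} Multiplying \eqref{eq:rec-q} (with index shifted) by $t^{n}/n!$ and summing gives
\[
\partial_t\mathcal Q=\mathcal Q+2zt\,\partial_t\mathcal Q+2z(1-z)\partial_z\mathcal Q,\qquad \mathcal Q(z;0)=q_{-1}=1.
\]
Here the term $2(n+1)zq_n$ produces $2zt\partial_t\mathcal Q$ after reindexing. Since the $t^{n+1}$ coefficient is determined recursively by lower coefficients and their $z$-derivatives, this equation has a unique formal solution with constant term $1$.

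I then verify that $G(z,t)=F_z(2t)$ solves it. Write $G^{-2}=(e^{2(z-1)t}-z)/(1-z)$ and compute $\partial_t$ and $\partial_z$ of $\log G=-\tfrac12\log(e^{2(z-1)t}-z)+\tfrac12\log(1-z)$. The identity should reduce to an algebraic identity in $E=e^{2(z-1)t}$, which I check directly. The initial value is $G(z,0)=1$, as required.

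\emph{Step 2: the series $\mathcal P$.} Similarly, multiplying \eqref{eq:rec-p-general} by $t^{n+1}/(n+1)!$ and summing over $n\ge-1$ (using $p_{-1}=0$) gives
\[
\partial_t\mathcal P=2a(1-z)\mathcal P+2zt\,\partial_t\mathcal P+2z(1-z)\partial_z\mathcal P-2z\mathcal P+\mathcal Q,\qquad \mathcal P(a,z;0)=0.
\]
I will double-check the bookkeeping: the factor $(n+1-a)z$ splits into $2zt\partial_t\mathcal P$, which accounts for $(n+1)$, and $-2az\mathcal P$. Together with $2a\mathcal P$ this gives $2a(1-z)\mathcal P$. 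The shift from $p_n t^{n+1}$ to $p_{n+1}t^{n+2}/(n+2)!$ contributes the $-2z\mathcal P$ term, which I must pin down carefully. Uniqueness of the solution again follows by comparing coefficients.

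To verify the candidate, write $\mathcal P=\tfrac12 G\,H$ with $H(z,t)=\int_0^{2t}e^{(a-1)(1-z)s}F_z(s)\,ds$. Let $L=\partial_t-2zt\partial_t-2z(1-z)\partial_z$ denote the transport operator. Step 1 gives $LG=G$, so $L(GH)=G\,LH+GH$. The equation then reduces to the scalar identity
\[
\tfrac12 G\,LH+\tfrac12 GH=(2a(1-z)-2z)\tfrac12 GH+G.
\]
This is a linear first-order condition on $H$. The term $\partial_z H$ involves $\int_0^{2t}\partial_z\bigl(e^{(a-1)(1-z)s}F_z(s)\bigr)ds$, so it does not reduce to pointwise algebra.

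\emph{Main obstacle.} The hard part is handling this $z$-derivative under the integral. My first attempt is to find an exact derivative. For the integrand $\phi(s,z)=e^{(a-1)(1-z)s}F_z(s)$, I would look for a relation of the form
\[
z(1-z)\partial_z\phi=\partial_s\bigl(\kappa(s,z)\phi\bigr)+\lambda\phi,
\]
with $\kappa$ and $\lambda$ explicit, modelled on the Step 1 computation (which effectively shows $F_z$ is transported along the characteristics of $L$). Integrating in $s$ over $[0,2t]$ then converts $\partial_zH$ into boundary terms at $s=2t$ plus a multiple of $H$. The boundary terms should produce the inhomogeneity $G$, and the multiples of $H$ should cancel against the $a$- and $z$-coefficients.

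If this ansatz fails, the fallback is the method of characteristics. Solve $L$ explicitly: along the characteristics, $e^{2(z-1)t}$-type quantities are constant. Change variables so that the PDE becomes an ODE in $t$, and solve it by variation of constants, which directly yields the stated integral. Finally, analyticity near $t=0$ for real $z\ne1$ is immediate, since $e^{(z-1)s}-z\to1-z\ne0$ as $s\to0$, so the branch with value $1$ at the origin is well defined.
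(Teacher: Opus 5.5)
Your overall strategy matches the paper's: convert the recurrences into first-order equations for $\mathcal Q$ and $\mathcal P$ with $L=(1-2zt)\partial_t-2z(1-z)\partial_z$, use uniqueness of formal solutions, and verify the candidate by rewriting $z(1-z)\partial_z$ of the integrand as an $s$-derivative plus a multiple of the integrand. Step~1 is correct. Step~2, however, contains a concrete error: your equation for $\mathcal P$ has a spurious term $-2z\mathcal P$. Since $p_n$ is the coefficient of $t^{n+1}/(n+1)!$ in $\mathcal P$, the term $2(n+1)zp_n$ yields exactly $2zt\,\partial_t\mathcal P$. There is no further ``shift'' contribution, and the correct equation is
\[
L\mathcal P=2a(1-z)\mathcal P+\mathcal Q,\qquad \mathcal P(a,z;0)=0.
\]
You can detect the error at order $t^1$. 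Your equation forces $p_1=2a(1-z)+1$, while the recurrence gives $p_1=2(1-a)z+2a+1$. As a result, the identity you set out to verify, $LH=2+(2a(1-z)-2z-1)H$, is false for the candidate. The true value is $LH=2+(2a(1-z)-1)H$, which differs by $2zH$. So the verification cannot succeed as planned, and the method-of-characteristics fallback would solve the wrong equation and produce a different function.

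With the term removed, your ansatz goes through with explicit data. Put $c=2(a-1)(1-z)$, so that $\phi(s,z)=e^{cs/2}F_z(s)$. Evaluate $LG=G$ at $t=s/2$ and use $\partial_z c=-2(a-1)$; the $s$-dependent terms from differentiating $e^{cs/2}$ in $s$ and in $z$ cancel. This gives
\[
z(1-z)\partial_z\phi=\partial_s\bigl((1-zs)\phi\bigr)+\frac{2z-1-c}{2}\,\phi ,
\]
that is, $\kappa=1-zs$ and $\lambda=(2z-1-c)/2$. Now integrate over $[0,2t]$. The boundary term at $s=2t$ cancels the $\partial_t$ contribution. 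The boundary term at $s=0$ gives the inhomogeneity $2$, since $\phi(0,z)=1$. Finally $1+c-2z=2a(1-z)-1$, so $LH=2+(2a(1-z)-1)H$, and hence $L(\tfrac12GH)=2a(1-z)\cdot\tfrac12GH+G$. The $-2z$ you anticipated does appear, but it comes from integrating $(1-zs)\partial_s\phi$ by parts, not from the differential equation itself. This is exactly the paper's computation $DJ=1+(2aw-1)J$ for $J=H/2$, followed by $D(QJ)=Q+2awQJ$ and the uniqueness lemma.
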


We prove \cref{thm:generating} in \cref{sec:generating} and compare it there with the known hypergeometric formula.
Combining \cref{thm:moment} at $(a,z)=(1,1/4)$ with the identity \eqref{eq:stephan} of B\'enyi and Matsusaka gives the following interpretation.

\begin{corollary}\label{cor:polybernoulli}
Take $N_1,N_2,U$ as in \cref{thm:moment} with $(a,z)=(1,1/4)$.
Set
\[
Z_1\coloneqq N_1+\frac12+U\left(N_2+\frac12\right).
\]
Then $Z_1=(2/3)Y_{1,1/4}$.
For every integer $n\ge0$,
\[
\sum_{k=0}^n B_{n-k}^{(-k)}=(n+1)\mathbb E[Z_1^n].
\]
In particular, the sequence $\bigl((n+1)^{-1}\sum_{k=0}^n B_{n-k}^{(-k)}\bigr)_{n\ge0}$ is a Stieltjes moment sequence.
\end{corollary}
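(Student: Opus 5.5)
The corollary follows by combining \cref{thm:moment} at $(a,z)=(1,1/4)$ with Stephan's identity \eqref{eq:stephan}. Only a scaling computation and the Stieltjes property of the moment sequence need checking.

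First, the identity $Z_1=(2/3)Y_{1,1/4}$. At $a=1$, $z=1/4$ we have $1-z=3/4$ and $2a-1=1$, so
\[
Y_{1,1/4}=\tfrac34\{2N_1+1+U(2N_2+1)\}=\tfrac32\Bigl\{N_1+\tfrac12+U\bigl(N_2+\tfrac12\bigr)\Bigr\}=\tfrac32 Z_1 .
\]
This holds pointwise on the underlying probability space, not merely in law.

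Next, the moments. By \cref{thm:moment}, $\mathbb E[Y_{1,1/4}^n]=p_n(1,1/4)/(n+1)$ for every $n\ge0$. Since $p_n(1,z)=p_n(z)$ by construction (the recurrence \eqref{eq:rec-p-general} at $a=1$ reduces to Lehmer's recurrence with the same initial value), we obtain
\[
(n+1)\mathbb E[Z_1^n]=(n+1)\left(\tfrac23\right)^n\mathbb E[Y_{1,1/4}^n]=\left(\tfrac23\right)^n p_n\!\left(\tfrac14\right).
\]
By \eqref{eq:stephan}, the right-hand side equals $\sum_{k=0}^nB_{n-k}^{(-k)}$. The one point to verify is that $p_n(1,z)$ coincides with Lehmer's $p_n(z)$. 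To see this, substitute $a=1$ into \eqref{eq:rec-p-general}: the coefficient $2((n+1-a)z+a)$ becomes $2(nz+1)$, and both recurrences start from $p_{-1}=0$, so the two sequences agree term by term.

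Finally, the Stieltjes property. $Z_1\ge0$ almost surely, since $N_i\ge0$ and $U\in[0,1]$. Hence its law $\mu$ is a probability measure on $[0,\infty)$. The moments $\mathbb E[Z_1^n]$ are finite, for instance because $Y_{1,1/4}$ has a finite moment generating function near $0$ by \cref{thm:moment}. Thus $\bigl((n+1)^{-1}\sum_{k=0}^nB_{n-k}^{(-k)}\bigr)_{n\ge0}=\bigl(\int x^n\,d\mu\bigr)_{n\ge0}$ is a Stieltjes moment sequence. There is no real obstacle here; all the substantive work is contained in \cref{thm:moment} and \eqref{eq:stephan}.
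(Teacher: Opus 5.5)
Your proof is correct and follows essentially the same route as the paper: the paper combines \cref{cor:alpha} at $a=1$ (which is just \cref{thm:moment} rescaled via $Z_a=(2/3)Y_{a,1/4}$) with \eqref{eq:stephan}, and then uses $Z_1\ge0$ to get the Stieltjes property. You do the scaling directly and explicitly check $p_n(1,z)=p_n(z)$, which the paper instead leaves to \cref{cor:alpha} and the introduction.
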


In \cref{sec:corollaries}, we obtain negative binomial representations at $a=0$ and a moment representation for Lehmer's $p_n(z)$ at $a=1$. 
We also derive the recurrence for $\alpha_n(a)$ from the differential equation in \cref{cor:ode}, and obtain the known Eulerian expansions from the random variable $Y_{a,z}$. 
For $0\le z<1$, we show that the normalized sequence $(p_n(a,z)/(n+1))_{n\ge0}$ is a Stieltjes moment sequence if and only if $a\ge0$, and in that case we derive Hankel total non-negativity and log-convexity. 
Finally, we give uniform and chi-square constructions at the boundary values $z=0$ and $z=1$, and a moment interpretation of the polynomial contribution to Hurwitz central binomial values.

\section{Preliminaries}\label{sec:preliminaries}
Throughout, $n$ is a non-negative integer unless stated otherwise, and $w \coloneqq 1-z$. 
Probability statements use $a\in\mathbb R$ and $0\le z<1$ unless another parameter range is specified. 
The first values of $p_n(a,z)$, computed from the recurrence \eqref{eq:rec-p-general}, are
\begin{align*}
p_0(a,z)&=1,\\
p_1(a,z)&=2(1-a)z+2a+1,\\
p_2(a,z)&=4(1-a)^2z^2-2(4a^2-3a-5)z+4a^2+2a+1.
\end{align*}
The values in \cref{tab:initial} agree with the table of Ikeda and Kadono \cite{IkedaKadono2026}.
\begin{table}[ht]
\centering
\begin{tabular}{c|c|c}
$n$ & $p_n(1,z)$ & $q_n(z)$\\ \hline
$-1$ & $0$ & $1$\\
$0$ & $1$ & $1$\\
$1$ & $3$ & $2z+1$\\
$2$ & $8z+7$ & $4z^2+10z+1$\\
$3$ & $20z^2+70z+15$ & $8z^3+60z^2+36z+1$
\end{tabular}
\vspace{3mm}
\caption{Initial values of Lehmer's polynomials.}\label{tab:initial}
\end{table}

For a negative binomial variable $N$ of shape $y>0$ and success probability $1-z$ with the convention fixed in \cref{sec:introduction}, the moment generating function is
\begin{align}\label{mom-neg}
\mathbb E[e^{sN}]=\left(\frac{1-z}{1-ze^s}\right)^y.
\end{align}
This formula holds for $s<\log(1/z)$ when $0<z<1$, and for every real $s$ when $z=0$.

For $z\ne1$, the bivariate Eulerian polynomials are defined by
\[
\mathcal E(z,y;s)
\coloneqq\sum_{n=0}^{\infty}E_n(z,y)\frac{s^n}{n!}
=\left(\frac{1-z}{e^{(z-1)s}-z}\right)^y.
\]
The power is the analytic branch with value $1$ at $s=0$. Each coefficient is a polynomial in $z,y$; at $z=1$, polynomial continuation gives $E_n(1,y)=(y)_n$. 
In particular, $F_z(s)=\mathcal E(z,1/2;s)$. 
The first bivariate Eulerian polynomials are
\[
E_0(z,y)=1,\quad E_1(z,y)=y,\quad 
E_2(z,y)=y^2+zy,\quad
E_3(z,y)=y^3+3zy^2+z^2y+zy.
\]
This family is related to the $1/k$-Eulerian polynomials $A_n^{(k)}(z)$ of Savage and Viswanathan \cite{SavageViswanathan2012}: 
by their equation~(6), the exponential generating function of $(A_n^{(k)}(z))_{n\ge0}$ is $\mathcal E(z,1/k;ks)$, so that $A_n^{(k)}(z)=k^nE_n(z,1/k)$ for positive integers $k$.

To study the moment sequences arising from normalized generalized Lehmer polynomials, we recall some standard notions from the moment problem.
A real sequence $(m_n)_{n\ge0}$ is a Hamburger moment sequence if $m_n=\int_{\mathbb R}x^n\,d\mu(x)$ for every $n\ge0$, for some finite positive Borel measure $\mu$ on $\mathbb R$ with moments of all orders; it is determinate if this measure is unique. 
It is a Stieltjes moment sequence if such a measure can be chosen on $[0,\infty)$, as in \cref{sec:introduction}. 
The Hankel matrix associated with $(m_n)_{n\ge0}$ is $(m_{i+j})_{i,j\ge0}$.
A matrix is totally non-negative if every finite minor is non-negative.

\section{Generating functions}\label{sec:generating}
\begin{lemma}\label{lem:pde}
For the differential operator
\[
D\coloneqq(1-2zt)\partial_t-2zw\partial_z,
\]
we have
\[
D\mathcal Q=\mathcal Q,\qquad
D\mathcal P=2aw\mathcal P+\mathcal Q.
\]
Together with $\mathcal Q(z;0)=1$ and $\mathcal P(a,z;0)=0$, these equations determine the formal series uniquely.
\end{lemma}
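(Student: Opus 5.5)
We translate the recurrences \eqref{eq:rec-q} and \eqref{eq:rec-p-general} coefficientwise into operator identities on the formal series. The proof has three steps: the identity for $\mathcal Q$, the identity for $\mathcal P$, and uniqueness.

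\textbf{The identity for $\mathcal Q$.} Write $\mathcal Q=\sum_{n\ge0}q_{n-1}\,t^n/n!$. Then
\[
\partial_t\mathcal Q=\sum_{n\ge0}q_n\,\frac{t^n}{n!},
\qquad
-2zt\,\partial_t\mathcal Q=\sum_{n\ge1}(-2nz)\,q_{n-1}\,\frac{t^n}{n!},
\qquad
-2zw\,\partial_z\mathcal Q=\sum_{n\ge0}\bigl(-2z(1-z)q_{n-1}'\bigr)\frac{t^n}{n!}.
\]
The coefficient of $t^n/n!$ in $D\mathcal Q-\mathcal Q$ is therefore
\[
q_n-2nz\,q_{n-1}-2z(1-z)q_{n-1}'-q_{n-1}.
\]
This vanishes by \eqref{eq:rec-q} with index $n-1\ge-1$. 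At $n=0$ it reads $q_0-q_{-1}=0$, which holds since $q_0=q_{-1}=1$.

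\textbf{The identity for $\mathcal P$.} Write $\mathcal P=\sum_{n\ge0}p_{n-1}\,t^n/n!$, which is legitimate because $p_{-1}=0$. The coefficient of $t^n/n!$ in $D\mathcal P-2aw\mathcal P-\mathcal Q$ is
\[
p_n-2nz\,p_{n-1}-2z(1-z)\partial_zp_{n-1}-2a(1-z)p_{n-1}-q_{n-1}.
\]
Since $2nz+2a(1-z)=2((n-a)z+a)$, this is exactly \eqref{eq:rec-p-general} with index $n-1$. The only care needed is the index shift between the labels $p_n$ and the powers $t^{n+1}$.

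\textbf{Uniqueness.} We work in $R[[t]]$ with $R=\mathbb Q[a,z]$, adjoining $(1-z)^{-1}$ if desired; no division by $1-z$ is actually required. Suppose $\mathcal F=\sum_n f_n\,t^n/n!$ satisfies $D\mathcal F=c\,\mathcal F+\mathcal G$ with $c\in R$, $\mathcal G$ given, and $f_0$ prescribed. Comparing coefficients of $t^n/n!$ gives
\[
f_{n+1}=2nz\,f_n+2zw\,\partial_zf_n+c\,f_n+g_n.
\]
Each $f_{n+1}$ is thus determined recursively by $f_n$, so the coefficient sequence is uniquely determined.

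The $\mathcal Q$ equation is homogeneous with $c=1$, $\mathcal G=0$ and $f_0=1$. The $\mathcal P$ equation has $c=2aw$, $\mathcal G=\mathcal Q$ (already determined) and $f_0=0$.

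This last step is where care is needed: the operator is triangular in $t$-degree, with leading term $\partial_t$, and the $z$-derivative only acts on lower-index coefficients. That is what makes the recursion well defined.
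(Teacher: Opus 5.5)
Your proposal is correct and takes the same route as the paper. The paper multiplies the recurrences by $t^{n+1}/(n+1)!$ and sums over $n\ge-1$, which is the coefficient comparison you carry out in the reverse direction. It then obtains uniqueness by reading the coefficient identity as a recursion determining each coefficient from the previous one, as you do.
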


\begin{proof}
Multiply the recurrences \eqref{eq:rec-q} and \eqref{eq:rec-p-general} by $t^{n+1}/(n+1)!$ and sum over $n\ge-1$. The resulting equations are
\begin{align*}
\partial_t\mathcal Q&=2zt\partial_t\mathcal Q+\mathcal Q+2zw\partial_z\mathcal Q,\\
\partial_t\mathcal P&=2zt\partial_t\mathcal P+2aw\mathcal P+2zw\partial_z\mathcal P+\mathcal Q.
\end{align*}
Rearrangement gives the claims. Coefficient comparison recovers the original recurrences and hence proves uniqueness.
\end{proof}

\begin{proof}[Proof of \cref{thm:generating}]
Put
\[
Q(z,t)\coloneqq F_z(2t)=e^{wt}\left(\frac{w}{1-ze^{2wt}}\right)^{1/2}.
\]
Direct differentiation gives $DQ=Q$ and $Q(z,0)=1$, so \cref{lem:pde} gives $\mathcal Q=Q$. Next set
\[
c\coloneqq 2(a-1)w,\qquad g(z,u)\coloneqq e^{cu}Q(z,u),\qquad
J(z,t)\coloneqq \int_0^t g(z,u)\,du.
\]
Since $\partial_z c=-2(a-1)$, differentiation gives
\[
(1-2zu)\partial_u g-2zw\partial_z g=(1+c)g.
\]
Using $g(z,0)=1$ and integrating by parts, we obtain
\begin{align*}
DJ&=(1-2zt)g(z,t)
+\int_0^t\{-(1-2zu)\partial_u g+(1+c)g\}\,du\\
&=1+(1+c-2z)J=1+(2aw-1)J.
\end{align*}
Consequently, $D(QJ)=Q+2awQJ$ and $(QJ)(z,0)=0$. Since $\mathcal Q=Q$, the function $QJ$ satisfies the same equation and the same initial condition as $\mathcal P$ in \cref{lem:pde}, and the uniqueness in \cref{lem:pde} gives
\begin{align}\label{eq:P-integral}
\mathcal P(a,z;t)=Q(z,t)\int_0^t e^{2(a-1)wu}Q(z,u)\,du.
\end{align}
The substitution $s=2u$ proves the stated formula. The integrand is analytic near the origin, so the formal identities also hold for the corresponding analytic germs.
\end{proof}

\begin{corollary}\label{cor:ode}
For $z\ne1$, the function $\mathcal P$ satisfies
\[
(1-ze^{2wt})\partial_t\mathcal P
=w(\mathcal P+e^{2awt}),\qquad \mathcal P(a,z;0)=0.
\]
\end{corollary}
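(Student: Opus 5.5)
We differentiate the integral representation \eqref{eq:P-integral} from the proof of \cref{thm:generating}, namely $\mathcal P=Q(z,t)J(z,t)$ with $Q(z,t)=e^{wt}\bigl(w/(1-ze^{2wt})\bigr)^{1/2}$ and $J(z,t)=\int_0^t e^{2(a-1)wu}Q(z,u)\,du$. By the product rule,
\[
\partial_t\mathcal P=(\partial_tQ)J+Q\,e^{2(a-1)wt}Q,
\]
so two ingredients are needed: the logarithmic derivative of $Q$, and the square $Q^2$.

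First, $\log Q=wt+\tfrac12\log w-\tfrac12\log(1-ze^{2wt})$. Hence
\[
\frac{\partial_tQ}{Q}=w+\frac{zwe^{2wt}}{1-ze^{2wt}}=\frac{w}{1-ze^{2wt}},
\]
which gives $(1-ze^{2wt})\partial_tQ=wQ$. Second, $Q^2=we^{2wt}/(1-ze^{2wt})$. Multiplying the product-rule identity by $1-ze^{2wt}$ then yields
\[
(1-ze^{2wt})\partial_t\mathcal P=wQJ+e^{2(a-1)wt}\,we^{2wt}=w\bigl(\mathcal P+e^{2awt}\bigr).
\]
The initial condition $\mathcal P(a,z;0)=0$ holds because $J(z,0)=0$, and it also follows directly from the definition of $\mathcal P$.

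The identity holds for analytic germs near $t=0$ whenever $z\ne1$, so it holds as an identity of formal power series in $t$. One could alternatively verify it coefficientwise from the recurrence \eqref{eq:rec-p-general}, but the direct differentiation is shorter. There is no real obstacle. The only point needing care is the branch of the square root. The relation $Q^2=we^{2wt}/(1-ze^{2wt})$ holds for the branch with value $1$ at $t=0$, and the logarithmic derivative is valid near $t=0$, where $1-ze^{2wt}$ is close to $w\ne0$.
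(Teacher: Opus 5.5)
Your proposal is correct and follows the paper's own proof: you differentiate $\mathcal P=QJ$ from \eqref{eq:P-integral} and use the same two identities, $\partial_tQ/Q=w/(1-ze^{2wt})$ and $Q^2=we^{2wt}/(1-ze^{2wt})$. Your computation and your remark on the branch near $t=0$ are both sound.
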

\begin{proof}
The identities $\partial_tQ/Q=w/(1-ze^{2wt})$ and $Q^2=we^{2wt}/(1-ze^{2wt})$, applied to \eqref{eq:P-integral}, give the result.
\end{proof}

\begin{remark}\label{rem:degenerateode}
The equation in \cref{cor:ode} is a formal identity also at $z=1$, where it reduces to $0=0$. At that parameter it does not determine $\mathcal P$ from its initial value.
\end{remark}

\begin{corollary}\label{cor:recurrence}
For every real $a,z$, $p_0(a,z)=1$ and, for $n\ge1$,
\[
p_n(a,z)=p_{n-1}(a,z)+(2aw)^n
+z\sum_{j=0}^{n-1}\binom nj2^{n-j}w^{n-j-1}p_j(a,z).
\]
\end{corollary}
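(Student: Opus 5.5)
The plan is to read the recurrence off as the coefficient identity of the differential equation in \cref{cor:ode}. First I fix real $a$ and $z\ne1$, so that $w\ne0$. In that case \cref{cor:ode} is an identity of formal power series in $t$; equivalently, it is an identity of analytic germs at $t=0$ by \cref{thm:generating}. I will then extend the result to $z=1$ by polynomiality at the end.

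By definition $\mathcal P(a,z;t)=\sum_{n\ge0}p_n(a,z)t^{n+1}/(n+1)!$. Hence $\partial_t\mathcal P=\sum_{n\ge0}p_n(a,z)t^n/n!$. The coefficient of $t^n/n!$ in $\mathcal P$ itself is $p_{n-1}(a,z)$, using the convention $p_{-1}(a,z)=0$. I then compare coefficients of $t^n/n!$ on both sides of
\[
(1-ze^{2wt})\partial_t\mathcal P=w(\mathcal P+e^{2awt}).
\]
The binomial (Cauchy) product of $e^{2wt}=\sum(2w)^k t^k/k!$ with $\partial_t\mathcal P$ shows that the left side contributes
\[
p_n-z\sum_{j=0}^{n}\binom nj(2w)^{n-j}p_j .
\]
The right side contributes $w\,p_{n-1}+w(2aw)^n$.

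Next I separate the $j=n$ term of the sum. It combines with $p_n$ to give $(1-z)p_n=w\,p_n$. This yields
\[
w\,p_n=w\,p_{n-1}+w(2aw)^n+z\sum_{j=0}^{n-1}\binom nj 2^{n-j}w^{n-j}p_j .
\]
Dividing by $w\neq0$ gives exactly the stated recurrence for $n\ge1$; note that every factor $w^{n-j}$ with $j\le n-1$ leaves a non-negative power $w^{n-j-1}$. For $n=0$ the same comparison gives $w\,p_0=w(0+1)$, so $p_0=1$. This also agrees with the initial value computed directly from \eqref{eq:rec-p-general}.

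Finally, both sides of the recurrence are polynomials in $(a,z)$, since $w=1-z$ and $w^{n-j-1}$ has non-negative exponent. They agree for all real $a$ and all $z\ne1$, so they agree identically, including at $z=1$. This step is needed because \cref{rem:degenerateode} notes that the differential equation degenerates to $0=0$ at $z=1$ and cannot be used there directly.

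The only step needing care is this passage to $z=1$ together with the division by $w$. Everything else is routine coefficient extraction.
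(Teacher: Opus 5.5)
Your proposal is correct and follows the paper's own argument: compare coefficients of $t^n/n!$ in the differential equation of \cref{cor:ode} for $z\ne1$, combine the $j=n$ term with $p_n$ to get $(1-z)p_n=w\,p_n$, divide by $w$, and extend to $z=1$ because both sides are polynomials in $(a,z)$. You simply write out the Cauchy-product computation and the $n=0$ case, which the paper leaves implicit.
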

\begin{proof}
For $z\ne1$, compare coefficients of $t^n/n!$ in \cref{cor:ode} and divide by $w$. The resulting identity is polynomial in $a,z$, so it holds also at $z=1$.
\end{proof}

\begin{proposition}\label{prop:beta}
Let $a\in\mathbb R$, $0<z<1$, $\tau\coloneqq(1-z)t$, and $X\coloneqq ze^{2\tau}$. For real $t$ near $0$,
\[
\mathcal P(a,z;t)=\frac{e^\tau}{2z^{a-1/2}\sqrt{1-X}}
\int_z^X u^{a-3/2}(1-u)^{-1/2}\,du.
\]
If $a>1/2$, the integral equals $B_X(a-1/2,1/2)-B_z(a-1/2,1/2)$, where
$B_x(\alpha,\beta)=\int_0^x u^{\alpha-1}(1-u)^{\beta-1}\,du$.
\end{proposition}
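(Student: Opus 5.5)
Starting from the integral formula \eqref{eq:P-integral} obtained in the proof of \cref{thm:generating},
\[
\mathcal P(a,z;t)=Q(z,t)\int_0^t e^{2(a-1)wu}Q(z,u)\,du,\qquad Q(z,u)=e^{wu}\Bigl(\frac{w}{1-ze^{2wu}}\Bigr)^{1/2},
\]
I will perform the substitution $u\mapsto v\coloneqq ze^{2wu}$ in the inner integral. For real $t$ near $0$ and $0<z<1$, the variable $v$ stays in $(0,1)$, so all powers are the positive real ones and agree with the branches fixed at the origin. Note that $e^{wu}=(v/z)^{1/2}$ and $e^{2(a-1)wu}=(v/z)^{a-1}$. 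Also $du=dv/(2wv)$, and the limits $u=0,t$ become $v=z,X$.

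Plugging in, the integrand becomes
\[
(v/z)^{a-1}(v/z)^{1/2}\,w^{1/2}(1-v)^{-1/2}\,\frac{1}{2wv}
=\frac{1}{2w^{1/2}z^{a-1/2}}\,v^{a-3/2}(1-v)^{-1/2}.
\]
The outer factor is $Q(z,t)=e^{\tau}w^{1/2}(1-X)^{-1/2}$. Multiplying, the powers of $w$ cancel, and I obtain
\[
\frac{e^\tau}{2z^{a-1/2}\sqrt{1-X}}\int_z^X v^{a-3/2}(1-v)^{-1/2}\,dv,
\]
which is the claimed formula. The orientation of the integral is automatic, including the case $t<0$ where $X<z$. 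The only care needed is that $v$ remains in $(0,1)$, which holds near $t=0$ since $z\in(0,1)$; this is the sole point where the hypothesis $0<z<1$ enters.

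For the final statement, when $a>1/2$ the exponent $a-3/2>-1$, so $v^{a-3/2}(1-v)^{-1/2}$ is integrable at $0$. Hence $\int_z^X=\int_0^X-\int_0^z=B_X(a-1/2,1/2)-B_z(a-1/2,1/2)$.

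I do not expect a real obstacle. The main step is the bookkeeping of the factors $z^{a-1}$, $z^{1/2}$ and $w^{\pm1/2}$ in the change of variables, which must combine to $z^{-(a-1/2)}$ with no residual power of $w$.
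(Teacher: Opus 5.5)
Your proof is correct and is essentially the paper's argument. The paper first rescales $s=2v/w$ in \cref{thm:generating} and then substitutes $u=ze^{2v}$, while you compose these into the single substitution $v=ze^{2wu}$ in \eqref{eq:P-integral}, with the same bookkeeping of the factors of $z$ and $w$ and the same integrability observation at $0$ for $a>1/2$.
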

\begin{proof}
In \cref{thm:generating}, substitute $s=2v/w$ to obtain
\[
\mathcal P(a,z;t)
=\frac{e^\tau}{\sqrt{1-X}}
\int_0^\tau\frac{e^{(2a-1)v}}
{\sqrt{1-ze^{2v}}}\,dv.
\]
Then substituting $u=ze^{2v}$ and simplifying the prefactor gives the result.
\end{proof}

\begin{remark}\label{rem:hypergeometric}
For $a>1/2$, put
\[
H_a(x)\coloneqq{}_2F_1\left(\frac12,a-\frac12;a+\frac12;x\right).
\]
Since $B_x(a-1/2,1/2)=x^{a-1/2}H_a(x)/(a-1/2)$, \cref{prop:beta} gives
\[
\mathcal P(a,z;t)=\frac{e^\tau}{(2a-1)\sqrt{1-X}}
\{e^{(2a-1)\tau}H_a(X)-H_a(z)\}.
\]
This is the generating function obtained by Ikeda and Kadono \cite{IkedaKadono2026} in the proof of their Eulerian polynomial representation. Thus the generating function itself is already known; our integral form and its derivation make the probabilistic factorization explicit.

The definite integral also removes apparent singularities in the parameter. For $0<z,X<1$, its expansion, with $\alpha=a-1/2$, is
\[
\int_z^X u^{\alpha-1}(1-u)^{-1/2}\,du
=\sum_{k=0}^{\infty}\frac{(1/2)_k}{k!}
\frac{X^{\alpha+k}-z^{\alpha+k}}{\alpha+k}.
\]
At $\alpha=-k$, the quotient is $\log(X/z)$. 
The binomial series for $(1-u)^{-1/2}$ converges uniformly on the interval between $z$ and $X$, which justifies termwise integration for every complex $\alpha$. 
Each term of the resulting series is entire in $\alpha$, and the series converges locally uniformly in $\alpha$. 
Hence the integral is entire in $\alpha$, and any poles of the separate incomplete beta terms cancel.
\end{remark}

\section{Proof of the moment representation}\label{sec:proof}
\begin{proof}[Proof of \cref{thm:moment}] 
Write $X_i=w(2N_i+1)$ and $W=X_2+2(a-1)w$, so that $Y_{a,z}=X_1+UW$. By \eqref{mom-neg} with $y=1/2$ and $s=2wt$, we have
\[
\mathbb E[e^{tX_i}]=e^{wt}\left(\frac{w}{1-ze^{2wt}}\right)^{1/2}=F_z(2t).
\]
In \cref{thm:generating}, substitute $s=2tu$ to obtain
\begin{align*}
\frac{\mathcal P(a,z;t)}{t}
&=F_z(2t)\int_0^1e^{2(a-1)wtu}F_z(2tu)\,du\\
&=\mathbb E[e^{tX_1}]\int_0^1\mathbb E[e^{tuW}]\,du
=\mathbb E[e^{tY_{a,z}}].
\end{align*}
For $0<z<1$ and $t\in I_z$, both $ze^{2wt}<1$ and $ze^{2wtu}<1$ hold for every $u\in[0,1]$. The integrand is continuous and bounded on this interval. Tonelli's theorem and independence therefore justify the calculation, with finite exponential moments throughout $I_z$. At $z=0$, $Y_{a,0}$ is bounded. These formulas also give the asserted real analytic continuation.

Moreover,
\[
|Y_{a,z}|\le w(2N_1+2N_2+2|a|+2),
\]
so independence and \eqref{mom-neg} give, for $0<\varepsilon<\log(1/z)/(2w)$ when $0<z<1$ (and for any $\varepsilon>0$ when $z=0$),
\begin{align*}
\mathbb E[e^{\varepsilon|Y_{a,z}|}]
&\le e^{2\varepsilon w(|a|+1)}
\mathbb E[e^{2\varepsilon wN_1}]\mathbb E[e^{2\varepsilon wN_2}]\\
&=e^{2\varepsilon w(|a|+1)}\frac{w}{1-ze^{2\varepsilon w}}<\infty.
\end{align*}
Thus the law $\mu$ of $Y_{a,z}$ has a finite absolute exponential moment, meaning that
\[
\int_{\mathbb R}e^{\varepsilon|x|}\,d\mu(x)
=\mathbb E[e^{\varepsilon|Y_{a,z}|}]<\infty
\]
for some $\varepsilon>0$. Dominated convergence, using the bound $\sum_{n\ge0}|tY_{a,z}|^n/n!=e^{|t||Y_{a,z}|}\le e^{\varepsilon|Y_{a,z}|}$ for $|t|<\varepsilon$, allows us to expand the moment generating function and compare it with
\[
\frac{\mathcal P(a,z;t)}{t}
=\sum_{n=0}^{\infty}\frac{p_n(a,z)}{n+1}\frac{t^n}{n!}.
\]
This proves the moment identity, including its normalization at $t=0$.
\end{proof}

\begin{corollary}\label{cor:mixture}
Under the assumptions of \cref{thm:moment}, set
$\pi_m(z)=(1/2)_m(1-z)^{1/2}z^m/m!$. The law of $Y_{a,z}$ is
\[
\sum_{m,l\ge0}\pi_m(z)\pi_l(z)
\operatorname{Unif}\bigl(w(2m+1),\,2w(m+l+a)\bigr).
\]
Here $\operatorname{Unif}(b,c)$ denotes the uniform distribution on the interval with endpoints $b,c$, in either order, and denotes a point mass when $b=c$. Furthermore,
\[
p_n(a,z)=w^n\sum_{m,l\ge0}\pi_m(z)\pi_l(z)
\frac{\{2(m+l+a)\}^{n+1}-(2m+1)^{n+1}}{2l+2a-1},
\]
an absolutely convergent series. When the denominator vanishes, its quotient means $(n+1)(2m+1)^n$.
\end{corollary}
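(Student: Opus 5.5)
Condition on $(N_1,N_2)=(m,l)$, which happens with probability $\pi_m(z)\pi_l(z)$ by independence and the convention fixed in \cref{sec:introduction}. Since $U$ is independent of $(N_1,N_2)$, the conditional law of $Y_{a,z}=X_1+UW$ is the law of $b+U(c-b)$ with
\[
b=w(2m+1),\qquad c=w(2m+1)+w(2l+2a-1)=2w(m+l+a).
\]
This is $\operatorname{Unif}(b,c)$ in the stated sense. If $c<b$, the interval is traversed in reverse, and the image of $U$ is still uniform on the interval with these endpoints. If $c=b$, the law is the point mass at $b$. Summing over $(m,l)$ with weights $\pi_m\pi_l$ gives the mixture formula for the law.

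For the series, I compute the $n$th moment of each mixture component. For $b\ne c$,
\[
\mathbb E[(b+U(c-b))^n]=\int_0^1(b+u(c-b))^n\,du=\frac{c^{n+1}-b^{n+1}}{(n+1)(c-b)}.
\]
Substituting $b=w(2m+1)$, $c=2w(m+l+a)$ and $c-b=w(2l+2a-1)$, this equals
\[
\frac{w^n}{n+1}\cdot\frac{\{2(m+l+a)\}^{n+1}-(2m+1)^{n+1}}{2l+2a-1}.
\]
When $2l+2a-1=0$, the component is the point mass at $b$, so its moment is $b^n=w^n(2m+1)^n$. This agrees with the convention that the quotient means $(n+1)(2m+1)^n$, which is also the limit of the difference quotient. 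Multiplying by $n+1$ and applying \cref{thm:moment} gives $p_n(a,z)=(n+1)\mathbb E[Y_{a,z}^n]=(n+1)\sum_{m,l}\pi_m\pi_l\,\mathbb E[Y_{a,z}^n\mid N_1=m,N_2=l]$. This is the stated series.

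The only step needing care is interchanging the expectation with the double sum, and absolute convergence. I use the bound from the proof of \cref{thm:moment}:
\[
|Y_{a,z}|\le w(2N_1+2N_2+2|a|+2).
\]
Each conditional moment is bounded in absolute value by $\max(|b|,|c|)^n\le w^n(2m+2l+2|a|+2)^n$. Therefore the double sum of absolute values is at most $\mathbb E[w^n(2N_1+2N_2+2|a|+2)^n]$, which is finite because negative binomial variables have moments of all orders; this follows from \eqref{mom-neg} or from the finite exponential moment already shown. So the series converges absolutely, and Fubini's theorem (equivalently, the law of total expectation for an integrable variable) justifies the termwise computation. At $z=0$ only the term $m=l=0$ survives, and the argument is trivial.
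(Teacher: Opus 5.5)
Your proposal is correct and follows the paper's proof essentially step for step. You condition on $(N_1,N_2)=(m,l)$ to obtain the uniform components and integrate $(b+ud)^n$ over $[0,1]$, handling the point-mass case separately. You then justify absolute convergence and the averaging through the bound $w^n(2m+2l+2|a|+2)^n$ together with finiteness of all negative binomial moments, exactly as the paper does.
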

\begin{proof}
By independence, the event $N_1=m,N_2=l$ has probability $\pi_m(z)\pi_l(z)$, and on this event, the definition of $Y_{a,z}$ in \cref{thm:moment} gives $Y_{a,z}=w\{2m+1+U(2l+2a-1)\}$. 
Since $U$ is independent of $N_1,N_2$, the conditional law of $Y_{a,z}$ is the stated uniform distribution whenever $\pi_m(z)\pi_l(z)>0$.
If $b\coloneqq2m+1$ and $d\coloneqq2l+2a-1$, its $n$th moment is $w^n\int_0^1(b+ud)^n\,du$, and
\[
(n+1)\int_0^1(b+ud)^n\,du
=\begin{cases}
\bigl((b+d)^{n+1}-b^{n+1}\bigr)/d,&d\ne0,\\
(n+1)b^n,&d=0.
\end{cases}
\]
The absolute value is at most $(n+1)(2m+2l+2|a|+2)^n$. Negative binomial variables have moments of every order, which proves absolute convergence and justifies averaging.
\end{proof}

\begin{remark}\label{rem:normalization}
Since $\mathcal P(a,z;0)=0$, $\mathcal P$ itself is not a moment generating function. Its derivative satisfies
\[
\partial_t\mathcal P(a,z;t)=\frac{d}{dt}\bigl(t\mathbb E[e^{tY_{a,z}}]\bigr)
=\sum_{n=0}^{\infty}p_n(a,z)\frac{t^n}{n!}.
\]
The derivative need not be a moment generating function: at $(a,z)=(1,0)$, its proposed first two moments are $3$ and $7$, giving the negative variance $7-3^2=-2$. Indeed, $N_1=N_2=0$ almost surely at $z=0$, so $Y_{1,0}=1+U$, and \cref{thm:moment} gives $p_1(1,0)=2\mathbb E[1+U]=3$ and $p_2(1,0)=3\mathbb E[(1+U)^2]=7$. At $a=0$, however, it is a moment generating function, as shown below.
\end{remark}

\section{Corollaries and examples}\label{sec:corollaries}
\begin{corollary}\label{cor:negativebinomial}
For $0\le z<1$ and $n\ge0$, we have $p_n(0,z)=q_n(z)$ and
\[
q_n(z)=\mathbb E\bigl[\{(1-z)(2\widetilde N+1)\}^n\bigr],
\]
where $\widetilde N$ has negative binomial shape $3/2$ and success probability $1-z$. Moreover, if $y>0$ and $N$ has shape $y$ and the same success probability, then
\[
E_n(z,y)=(1-z)^n\mathbb E[(N+y)^n].
\]
\end{corollary}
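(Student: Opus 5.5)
The plan is to argue entirely with generating functions, using \cref{thm:generating} and \eqref{mom-neg}. There are three steps: the identity $p_n(0,z)=q_n(z)$, then the negative binomial representation of $q_n$, then the formula for $E_n(z,y)$. The $E_n$ statement is the most direct, so I would do it first.

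\emph{The $E_n$ formula.} For $N$ of shape $y$, \eqref{mom-neg} with $s$ replaced by $ws$ gives
\[
\mathbb E[e^{sw(N+y)}]=e^{wys}\Bigl(\frac{w}{1-ze^{ws}}\Bigr)^y=\Bigl(\frac{w}{e^{-ws}-z}\Bigr)^y=\mathcal E(z,y;s).
\]
This holds for $s<\log(1/z)/w$, and on a neighborhood of $0$. Since $N$ has a finite exponential moment, the same dominated convergence argument as in the proof of \cref{thm:moment} allows termwise expansion. Comparing coefficients of $s^n/n!$ then gives $E_n(z,y)=w^n\mathbb E[(N+y)^n]$.

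\emph{The identity $p_n(0,z)=q_n(z)$.} I would show $\partial_t\mathcal P(0,z;t)=\sum_n q_n(z)t^n/n!=\partial_t\mathcal Q(z;t)$; since both sides vanish at $t=0$ as formal series, this suffices. By \cref{cor:ode} at $a=0$,
\[
\partial_t\mathcal P=\frac{w(\mathcal P+1)}{1-ze^{2wt}}.
\]
On the other side, \cref{thm:generating} gives $\mathcal Q=F_z(2t)$. The proof of \cref{cor:ode} gives $\partial_t\mathcal Q/\mathcal Q=w/(1-ze^{2wt})$, so $\partial_t\mathcal Q=w\mathcal Q/(1-ze^{2wt})$.

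So it suffices to check $\mathcal P(0,z;t)+1=\mathcal Q(z;t)$. Both sides equal $1$ at $t=0$, and both satisfy the linear ODE $f'=wf/(1-ze^{2wt})$. For $\mathcal Q$ this is the identity just recalled. For $\mathcal P+1$ it is \cref{cor:ode} at $a=0$, since $e^{0}=1$. Uniqueness of solutions to this first-order linear ODE, or equivalently coefficient recursion for formal series, gives the equality.

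As a cross-check, one can integrate directly. With $a=0$,
\[
\mathcal P=\tfrac12F_z(2t)\int_0^{2t}e^{-ws}F_z(s)\,ds,
\]
and the integrand $e^{-ws}F_z(s)=\sqrt w\,(1-ze^{ws})^{-1/2}$ has an elementary antiderivative expressible via $F_z$, which again yields $\mathcal Q-1$. The identity is then polynomial in $z$, so it extends to all $z$.

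\emph{The representation of $q_n$.} From $\partial_t\mathcal Q=w\mathcal Q/(1-ze^{2wt})$ and $\mathcal Q=e^{wt}\bigl(w/(1-ze^{2wt})\bigr)^{1/2}$ we get
\[
\partial_t\mathcal Q=e^{wt}\,w^{-1/2}\,w\Bigl(\frac{w}{1-ze^{2wt}}\Bigr)^{3/2}\cdot\frac{1}{w}=e^{wt}\Bigl(\frac{w}{1-ze^{2wt}}\Bigr)^{3/2}.
\]
By \eqref{mom-neg} with $y=3/2$ and $s=2wt$, the right-hand side equals $\mathbb E[e^{tw(2\widetilde N+1)}]$. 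Expanding both sides, justified by a finite exponential moment as before, gives
\[
q_n(z)=\mathbb E\bigl[\{w(2\widetilde N+1)\}^n\bigr].
\]
Alternatively, this is exactly the $E_n$ formula with $y=3/2$ after the rescaling $s=2t$. At $z=0$ everything is deterministic and trivially consistent.

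The only delicate points are bookkeeping. One is the exponent arithmetic turning the power $1/2$ into $3/2$ in $\partial_t\mathcal Q$. The other is justifying the passage from formal series to analytic germs, which I would handle exactly as in the proof of \cref{thm:moment}.
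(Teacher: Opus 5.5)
Your main argument is correct and is essentially the paper's proof: $\mathcal P(0,z;t)+1$ and $\mathcal Q(z;t)$ solve the same initial value problem coming from \cref{cor:ode} at $a=0$; differentiating $\mathcal Q=F_z(2t)$ gives $e^{wt}\bigl(w/(1-ze^{2wt})\bigr)^{3/2}$; and \eqref{mom-neg} (with $y=3/2$, $s=2wt$, respectively with $ws$ in place of $s$) plus coefficient comparison gives both representations. Only your asides need fixing: in the cross-check $e^{-ws}F_z(s)=\sqrt w\,e^{-ws/2}(1-ze^{ws})^{-1/2}$, whose antiderivative $-2/F_z(s)$ does give $\mathcal Q-1$; the middle expression in your $\partial_t\mathcal Q$ display carries a spurious factor $w^{-1/2}$; and the $q_n$ formula is not ``exactly'' the $E_n$ formula at $y=3/2$, since $\mathcal E(z,3/2;2t)=e^{2wt}\partial_t\mathcal Q(z;t)$ (for instance $2E_1(z,3/2)=3\ne 2z+1=q_1(z)$).
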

\begin{proof}
For $a=0$, $\mathcal Q-1$ satisfies the equation and initial condition in \cref{cor:ode}. Uniqueness gives $\mathcal P(0,z;t)=\mathcal Q(z;t)-1$ near $0$. Differentiating, we obtain
\[
\partial_t\mathcal Q(z;t)
=e^{wt}\left(\frac{w}{1-ze^{2wt}}\right)^{3/2},
\]
which, by \eqref{mom-neg} with $y=3/2$ and $s=2wt$, is the moment generating function in the first assertion. 
For the second, \eqref{mom-neg} with $ws$ in place of $s$ gives
\[
\mathbb E[e^{sw(N+y)}]
=e^{yws}\left(\frac{w}{1-ze^{ws}}\right)^y
=\mathcal E(z,y;s).
\]
Coefficient comparison proves both identities.
\end{proof}

\begin{remark}\label{rem:charalambides}
The identity for general $y$ is already contained in the series expansion of Savage and Viswanathan \cite{SavageViswanathan2012}, equation~(9). 
It also follows from Charalambides \cite{Charalambides1991}, equation~(3.4), by setting $\alpha=0,\beta=y$ in his polynomial $A_n(z;\alpha,\beta)$; his positive-parameter formula extends continuously to $\alpha=0$. 
The negative binomial interpretation is therefore already known. 
The identity $p_n(0,z)=q_n(z)$ is also noted by Ikeda and Kadono \cite{IkedaKadono2026}; it does not extend to $n=-1$.
\end{remark}

\begin{corollary}\label{cor:lehmer}
For $0\le z<1$, with $N_1,N_2,U$ as in \cref{thm:moment}, Lehmer's polynomials satisfy
\[
\frac{p_n(z)}{n+1}
=\mathbb E\bigl[\{(1-z)(2N_1+1+U(2N_2+1))\}^n\bigr].
\]
\end{corollary}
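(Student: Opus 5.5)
This is the specialization $a=1$ of \cref{thm:moment}. The plan is to substitute $a=1$ into the definition of $Y_{a,z}$ and use that the generalized Lehmer polynomials reduce to Lehmer's original ones at $a=1$.

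First, I will check that $p_n(1,z)=p_n(z)$ for every $n\ge -1$. Put $a=1$ in the recurrence \eqref{eq:rec-p-general}. The coefficient $2((n+1-a)z+a)$ becomes $2(nz+1)$, so the recurrence becomes exactly Lehmer's recurrence for $p_{n+1}(z)$. The extra term $q_n(z)$ is the same in both. The initial values also agree, since $p_{-1}(1,z)=0=p_{-1}(z)$. Induction on $n$ then gives $p_n(1,z)=p_n(z)$ for all $n\ge-1$, which is the identity $p_n(1,z)=p_n(z)$ recalled in the introduction.

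Second, with $a=1$ we have $2N_2+2a-1=2N_2+1$. Hence
\[
Y_{1,z}=(1-z)\{2N_1+1+U(2N_2+1)\}.
\]
\cref{thm:moment} then gives $\mathbb E[Y_{1,z}^n]=p_n(1,z)/(n+1)=p_n(z)/(n+1)$ for every $n\ge0$ and $0\le z<1$, which is the claim.

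Neither step is a real obstacle. The only point needing care is that the two recurrences coincide coefficient by coefficient, and this is immediate from the substitution $a=1$.
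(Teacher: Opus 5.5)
Your proposal is correct and takes the same approach as the paper, whose proof is simply ``Set $a=1$ in \cref{thm:moment}.'' Your extra induction, showing that \eqref{eq:rec-p-general} at $a=1$ coincides with Lehmer's recurrence and has the same initial value $p_{-1}=0$, just makes explicit the identity $p_n(1,z)=p_n(z)$ that the paper recalls in the introduction.
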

\begin{proof}
Set $a=1$ in \cref{thm:moment}.
\end{proof}

\begin{corollary}\label{cor:alpha}
For $a\in\mathbb R$, take $N_1,N_2,U$ as in \cref{thm:moment} with $z=1/4$.
Put
\[
Z_a\coloneqq N_1+\frac12+U\left(N_2+a-\frac12\right),
\qquad \alpha_n(a)\coloneqq\left(\frac23\right)^np_n\left(a,\frac14\right).
\]
Then $\alpha_n(a)=(n+1)\mathbb E[Z_a^n]$. Moreover, $\alpha_0(a)=1$ and
\[
3\alpha_n(a)=2\alpha_{n-1}(a)
+\sum_{j=0}^{n-1}\binom nj\alpha_j(a)+3a^n\qquad(n\ge1).
\]
The first values are
\[
\alpha_1(a)=a+1,\qquad
\alpha_2(a)=a^2+\frac{4a+5}{3},\qquad
\alpha_3(a)=a^3+\frac{5a^2}{3}+\frac{29a+37}{9}.
\]
\end{corollary}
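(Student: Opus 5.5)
At $z=1/4$ we have $w=1-z=3/4$. The plan is to rescale $Y_{a,1/4}$ into $Z_a$, read off the moments from \cref{thm:moment}, and specialize \cref{cor:recurrence} to get the recurrence.

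\emph{Moment identity.} First check that $Z_a=(2/3)Y_{a,1/4}$. By definition,
\[
Y_{a,1/4}=\tfrac34\{2N_1+1+U(2N_2+2a-1)\}=\tfrac32\bigl\{N_1+\tfrac12+U(N_2+a-\tfrac12)\bigr\}=\tfrac32 Z_a .
\]
Hence $\mathbb E[Z_a^n]=(2/3)^n\mathbb E[Y_{a,1/4}^n]$. \cref{thm:moment} gives $\mathbb E[Y_{a,1/4}^n]=p_n(a,1/4)/(n+1)$, so $(n+1)\mathbb E[Z_a^n]=\alpha_n(a)$. The value $\alpha_0(a)=p_0=1$ is immediate.

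\emph{Recurrence.} Specialize \cref{cor:recurrence} at $z=1/4$, $w=3/4$:
\[
p_n=p_{n-1}+(3a/2)^n+\tfrac14\sum_{j=0}^{n-1}\binom nj 2^{n-j}(3/4)^{n-j-1}p_j .
\]
Multiply through by $(2/3)^n$ and substitute $p_j=(3/2)^j\alpha_j$.
\begin{itemize}
\item The first term becomes $(2/3)\alpha_{n-1}$.
\item The second term becomes $a^n$.
\item In the sum, the $j$th term carries the factor
\[
\tfrac14\,(2/3)^{n-j}\,2^{n-j}\,(3/4)^{n-j-1}=\tfrac14\cdot 2^{n-j}\cdot\bigl(\tfrac12\bigr)^{n-j}\cdot\tfrac43=\tfrac13 ,
\]
using $(2/3)(3/4)=1/2$. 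The sum is therefore $\tfrac13\sum_{j=0}^{n-1}\binom nj\alpha_j$.
\end{itemize}
Multiplying the resulting identity by $3$ gives the stated recurrence.

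\emph{Initial values.} Apply the recurrence with $n=1,2,3$.
\begin{itemize}
\item For $n=1$: $3\alpha_1=2+1+3a$, so $\alpha_1=a+1$.
\item For $n=2$: $3\alpha_2=2(a+1)+1+2(a+1)+3a^2$, so $\alpha_2=a^2+(4a+5)/3$.
\item For $n=3$: $3\alpha_3=2\alpha_2+1+3\alpha_1+3\alpha_2+3a^3=5\alpha_2+3\alpha_1+1+3a^3$.
\end{itemize}
Substituting $\alpha_1,\alpha_2$ into the $n=3$ line gives $5a^2+(20a+25)/3+3a+4+3a^3$. Dividing by $3$ yields $a^3+5a^2/3+(29a+37)/9$, as stated.

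The proof is essentially computational. The only step needing care is the power bookkeeping for the factor $1/3$ in the sum, so I would double-check it against $\alpha_1(a)$ from $p_1(a,1/4)=(1-a)/2+2a+1$: indeed $(2/3)\bigl((1-a)/2+2a+1\bigr)=a+1$.
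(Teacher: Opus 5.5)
Your proposal is correct and follows the paper's proof: the moment identity comes from $Z_a=(2/3)Y_{a,1/4}$ and \cref{thm:moment}, and the recurrence ultimately comes from the ODE in \cref{cor:ode}. The only difference is the order of steps: the paper first rescales the ODE to $(4-e^t)G_a'(t)-2G_a(t)=3e^{at}$ with $G_a(t)=t\,\mathbb E[e^{tZ_a}]$ and then compares coefficients, whereas you specialize the already coefficient-extracted \cref{cor:recurrence} and rescale afterwards; your bookkeeping of the factor $1/3$ and your values $\alpha_1,\alpha_2,\alpha_3$ all check out.
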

\begin{proof}
Since $Z_a=(2/3)Y_{a,1/4}$, \cref{thm:moment} gives the moments. Set
\[
G_a(t)\coloneqq t\mathbb E[e^{tZ_a}]
=\frac32\mathcal P\left(a,\frac14;\frac{2t}{3}\right).
\]
Rescaling \cref{cor:ode} gives $(4-e^t)G_a'(t)-2G_a(t)=3e^{at}$, while
$G_a'(t)=\sum_{n\ge0}\alpha_n(a)t^n/n!$. Comparing coefficients recovers the recurrence of Ikeda and Kadono \cite{IkedaKadono2026} and gives the listed values.
\end{proof}

\begin{proof}[Proof of \cref{cor:polybernoulli}]
Combine \cref{cor:alpha} at $a=1$ with the identity \eqref{eq:stephan} of B\'enyi and Matsusaka \cite{BenyiMatsusaka2023}, Theorem~3.6.
Since $Z_1\ge0$, the normalized sequence is Stieltjes. For $n=0,1,2,3$, the sums are $1,2,4,10$, and division by $n+1$ gives the corresponding moments $\mathbb E[Z_1^n]$, namely $1,1,4/3,5/2$.
\end{proof}

\begin{corollary}\label{cor:eulerian}
For all real $a,z$ and $n\ge0$,
\[
p_n(a,z)=2^n\sum_{j=0}^n\sum_{l=0}^j
\binom{n+1}{j+1}\binom jl(a-1)^{j-l}(1-z)^{j-l}
E_{n-j}\left(z,\frac12\right)E_l\left(z,\frac12\right).
\]
In particular,
\[
p_n(z)=2^n\sum_{k=0}^n\binom{n+1}{k}
E_{n-k}\left(z,\frac12\right)E_k\left(z,\frac12\right).
\]
\end{corollary}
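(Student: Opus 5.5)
The plan is to expand the moment representation of \cref{thm:moment} binomially, using \cref{cor:negativebinomial} to identify the moments of the negative binomial pieces with bivariate Eulerian polynomials. This gives the identity for $a\in\mathbb R$ and $0\le z<1$, and polynomial continuation then gives it for all real $a,z$.

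As in the proof of \cref{thm:moment}, write $X_i=w(2N_i+1)$ and $W=X_2+2(a-1)w$, so that $Y_{a,z}=X_1+UW$ with $X_1$, $U$ and $W$ independent. All moments are finite by the exponential moment bound established there, so expanding and using independence gives
\[
\frac{p_n(a,z)}{n+1}=\mathbb E[Y_{a,z}^n]=\sum_{j=0}^n\binom nj\,\mathbb E[X_1^{n-j}]\,\mathbb E[U^j]\,\mathbb E[W^j],
\qquad \mathbb E[U^j]=\frac1{j+1}.
\]
The second identity of \cref{cor:negativebinomial} with $y=1/2$ gives
\[
\mathbb E[X_i^m]=2^mw^m\,\mathbb E[(N_i+1/2)^m]=2^mE_m(z,1/2).
\]
Expanding $W^j$ binomially then yields
\[
\mathbb E[W^j]=\sum_{l=0}^j\binom jl\,2^{j-l}(a-1)^{j-l}w^{j-l}\,2^lE_l(z,1/2).
\]

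Next, collect the factors. The powers of $2$ combine as $2^{n-j}\cdot2^{j-l}\cdot2^l=2^n$. The binomial factors combine as
\[
(n+1)\binom nj\frac1{j+1}=\binom{n+1}{j+1}.
\]
This gives exactly the stated double sum for $a\in\mathbb R$ and $0\le z<1$.

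To pass to all real $a,z$, observe that both sides are polynomials in $(a,z)$: the left side by the recurrence \eqref{eq:rec-p-general}, and the right side because each $E_m(z,1/2)$ is a polynomial. For each fixed $a$, the two polynomials in $z$ agree on the infinite set $[0,1)$, so they coincide identically.

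For the special case, set $a=1$. Then $(a-1)^{j-l}$ vanishes unless $l=j$, so only the terms with $l=j$ survive, and $p_n(z)=p_n(1,z)=2^n\sum_j\binom{n+1}{j+1}E_{n-j}E_j$. Reindexing by $k=n-j$ and using $\binom{n+1}{j+1}=\binom{n+1}{n-j}$ gives the stated form.

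The argument is essentially routine. The only points needing care are justifying the interchange of expectation and finite binomial sums, which is immediate from finite moments, and correctly invoking the $y=1/2$ case of \cref{cor:negativebinomial} to express $\mathbb E[(2N+1)^m]$ through $E_m(z,1/2)$.
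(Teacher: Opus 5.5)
Your proposal is correct and follows essentially the same route as the paper: you write $Y_{a,z}=X_1+UW$, expand using independence and $\mathbb E[U^j]=1/(j+1)$, and identify the negative binomial moments via \cref{cor:negativebinomial} with $y=1/2$. You then extend to all real $z$ by polynomial identity and specialize to $a=1$ with the reindexing $k=n-j$, exactly as the paper does.
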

\begin{proof}
First suppose $0\le z<1$ and use $Y_{a,z}=X_1+UW$ from \cref{sec:proof}. Independence and $\mathbb E[U^j]=1/(j+1)$ give
\[
(n+1)\mathbb E[Y_{a,z}^n]
=\sum_{j=0}^n\binom{n+1}{j+1}\mathbb E[X_1^{n-j}]\mathbb E[W^j].
\]

Now \cref{cor:negativebinomial} with $y=1/2$ gives $\mathbb E[X_i^r]=2^rE_r(z,1/2)$, and
\[
\mathbb E[W^j]=2^j\sum_{l=0}^j\binom jl(a-1)^{j-l}w^{j-l}E_l(z,1/2).
\]
This proves the first identity, which extends to every real $z$ because both sides are polynomials. At $a=1$, only $l=j$ remains; replacing $j$ by $n-k$ gives the second identity. These are the formulas of Ikeda and Kadono \cite{IkedaKadono2026} and B\'enyi and Matsusaka \cite{BenyiMatsusaka2023}, Theorem~3.8.
\end{proof}

\begin{corollary}\label{cor:stieltjes}
Fix $a\in\mathbb R$ and $0\le z<1$, and set $m_n=p_n(a,z)/(n+1)$. The sequence $(m_n)_{n\ge0}$ is a determinate Hamburger moment sequence (see \cref{sec:preliminaries}). 
It is a Stieltjes moment sequence if and only if $a\ge0$. In that case the Hankel matrix $(m_{i+j})_{i,j\ge0}$ has all minors non-negative and
\[
m_n^2\le m_{n-1}m_{n+1}\qquad(n\ge1).
\]
\end{corollary}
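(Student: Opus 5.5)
The plan is to read off all assertions from the law $\mu$ of $Y_{a,z}$ in \cref{thm:moment}, using the exponential moment bound from its proof and the mixture description in \cref{cor:mixture}.

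\emph{Determinacy.} By \cref{thm:moment}, $m_n=\mathbb E[Y_{a,z}^n]$, so $(m_n)$ is a Hamburger moment sequence represented by $\mu$. In the proof of \cref{thm:moment} we showed $\int e^{\varepsilon|x|}\,d\mu(x)<\infty$ for some $\varepsilon>0$. The standard criterion then applies: a finite absolute exponential moment makes the moment generating function analytic near $0$. The characteristic function is therefore determined by the moments, and $\mu$ is the unique representing measure. Equivalently, Carleman's condition holds, since $m_{2n}\le C(2n)!\varepsilon^{-2n}$.

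\emph{Stieltjes if and only if $a\ge0$.} Assume first $a\ge0$; I show $Y_{a,z}\ge0$ almost surely. Put $d=2N_2+2a-1$.
\begin{itemize}
\item If $d\ge0$, then $Y_{a,z}\ge w(2N_1+1)>0$.
\item If $d<0$, then since $U\le1$ we get $Y_{a,z}\ge w(2N_1+1+d)=2w(N_1+N_2+a)\ge0$.
\end{itemize}
Hence $\mu$ lives on $[0,\infty)$. Conversely, let $a<0$. The event $\{N_1=N_2=0\}$ has probability $\pi_0(z)^2=1-z>0$. By \cref{cor:mixture}, on this event $Y_{a,z}$ is uniform on the interval with endpoints $2aw<0$ and $w$, so $\mu((-\infty,0))>0$. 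If some measure on $[0,\infty)$ also represented $(m_n)$, determinacy would force it to equal $\mu$, which is a contradiction. The case $z=0$ needs no separate treatment, since $N_i=0$ almost surely and $w=1$.

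\emph{Hankel total non-negativity.} For $a\ge0$, a minor with rows $i_1<\dots<i_k$ and columns $j_1<\dots<j_k$ equals $\int\det(x^{i_r+j_s})\,d\mu$. By the continuous Cauchy--Binet (Andréief) formula it becomes
\[
\frac1{k!}\int_{[0,\infty)^k}\det\bigl(x_s^{i_r}\bigr)\det\bigl(x_s^{j_r}\bigr)\,d\mu^{\otimes k}(x).
\]
On the ordered region $0\le x_1<\dots<x_k$, both generalized Vandermonde determinants are non-negative; this is total positivity of the kernel $x^i$ on $[0,\infty)$. By symmetry the integrand is non-negative everywhere, so the minor is non-negative. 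Alternatively I could cite the classical fact that a sequence is Stieltjes exactly when its Hankel matrix is totally non-negative.

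\emph{Log-convexity.} This is the $2\times2$ minor with rows $n-1,n$ and columns $0,1$, which gives $m_{n-1}m_{n+1}-m_n^2\ge0$. One can also see it directly from Cauchy--Schwarz applied to $Y^{(n-1)/2}\cdot Y^{(n+1)/2}$ with $Y\ge0$.

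The main point needing care is the converse direction. It requires both determinacy and a genuinely positive mass on the negative axis, which the uniform component on $\{N_1=N_2=0\}$ provides.
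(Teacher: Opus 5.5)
Your proposal is correct and has the same skeleton as the paper's proof. You represent $(m_n)$ by the law $\mu$ of $Y_{a,z}$ and use the exponential moment bound for determinacy. When $a\ge0$ you read non-negativity off the endpoints $w(2m+1)$ and $2w(m+l+a)$ of the uniform mixture. When $a<0$ you use the uniform piece on $\{N_1=N_2=0\}$, of mass $w$, to put positive mass on $(-\infty,0)$, and you let determinacy exclude any other representing measure on $[0,\infty)$. Log-convexity via Cauchy--Schwarz is also what the paper does.

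The genuine differences are in two sub-steps.

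\emph{Determinacy.} The paper argues from scratch. Tonelli applied to $\cosh(\varepsilon x)$ shows that any competitor $\nu$ with the same moments also has a finite absolute exponential moment. The two moment generating functions then agree on a strip by the identity theorem. Your first phrasing (``the characteristic function is therefore determined by the moments'') skips exactly this point about $\nu$. Rely instead on the explicit theorem you allude to (a moment series with positive radius of convergence determines the measure), or on your Carleman bound $m_{2n}\le C(2n)!\varepsilon^{-2n}$, which does settle it. ``Equivalently'' there should read ``alternatively''.

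\emph{Hankel minors.} The paper simply cites the classical equivalence between Stieltjes sequences and totally non-negative Hankel matrices. You prove the needed direction directly, via Andr\'eief's identity and positivity of generalized Vandermonde determinants on $0\le x_1<\dots<x_k$. Your route is self-contained, handles arbitrary (non-contiguous) row and column sets explicitly, and makes log-convexity a special $2\times2$ minor. The paper's citation is shorter.

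One slip to fix: the minor is $\det\bigl(\int x^{i_r+j_s}\,d\mu\bigr)$, not $\int\det(x^{i_r+j_s})\,d\mu$. The latter vanishes for $k\ge2$, because the matrix $(x^{i_r+j_s})_{r,s}$ has rank one. Your displayed Andr\'eief formula is the correct statement.
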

\begin{proof}
The law $\mu$ of $Y_{a,z}$ represents the moments and, by the estimate in the proof of \cref{thm:moment}, satisfies $\int_{\mathbb R}e^{\varepsilon|x|}\,d\mu(x)<\infty$ for some $\varepsilon>0$.
 To see uniqueness, let $\nu$ be any other finite positive Borel measure on $\mathbb R$ with the same moments. For this $\varepsilon$, Tonelli's theorem gives
\[
\int\cosh(\varepsilon x)\,d\nu(x)
=\sum_{n=0}^{\infty}\frac{\varepsilon^{2n}m_{2n}}{(2n)!}
=\int\cosh(\varepsilon x)\,d\mu(x)<\infty.
\]
Since $e^{\varepsilon|x|}\le2\cosh(\varepsilon x)$, both measures have finite absolute exponential moments. Their moment generating functions are analytic on $|\Re (t)|<\varepsilon$ and have the same Taylor coefficients at $0$. By the identity theorem, these functions coincide throughout the strip and hence on the imaginary axis. Their characteristic functions therefore coincide, and uniqueness of characteristic functions implies $\mu=\nu$.

The mixture in \cref{cor:mixture} is supported on $[0,\infty)$ when $a\ge0$. If $a<0$, the event $N_1=N_2=0$ has probability $w$, and on it $Y_{a,z}=w\{1+(2a-1)U\}$. Hence
\[
\mathbb P(Y_{a,z}<0)\ge w\frac{-2a}{1-2a}>0.
\]
Determinacy excludes any alternative representing measure on $[0,\infty)$. The Stieltjes characterization recalled by Liang, Mu, and Wang \cite{LiangMuWang2016}, Section~1, gives total non-negativity of the Hankel matrix (called total positivity there). 
The log-convexity inequality follows from Cauchy--Schwarz applied to $Y_{a,z}^{(n-1)/2}$ and $Y_{a,z}^{(n+1)/2}$.
\end{proof}

\begin{example}\label{ex:variance}
With $m_n=p_n(a,z)/(n+1)$ and $h\coloneqq1+2(a-1)(1-z)$, the normalization, mean, and variance of $Y_{a,z}$ are given by
\[
m_0=1,\qquad m_1=1+\frac{h} {2},\qquad
m_2-m_1^2=\frac{8z}{3}+\frac{h^2}{12}.
\]
Indeed, with $X_i$ and $W$ as in \cref{sec:proof}, $\mathbb E[X_i]=1$, $\operatorname{Var}(X_i)=2z$, $\mathbb E[W]=h$, and independence gives
$\operatorname{Var}(X_1+UW)=2z+(2z+h^2)/3-h^2/4$.
Thus the first $2\times2$ Hankel determinant $m_0m_2-m_1^2=m_2-m_1^2$ is non-negative for every real $a$, including parameters outside the Stieltjes range.
\end{example}

\begin{example}\label{ex:uniform}
We specialize \cref{thm:moment} to the boundary value $z=0$; the other boundary value, $z=1$, is treated in \cref{cor:boundary}.
At $z=0$, $N_1=N_2=0$ almost surely, so $Y_{a,0}=1+(2a-1)U$ is uniform on the interval with endpoints $1,2a$, with a point mass at $1$ when $a=1/2$. Thus
\[
\frac{\mathcal P(a,0;t)}{t}
=\begin{cases}
\displaystyle\frac{e^{2at}-e^t}{(2a-1)t},&a\ne1/2,\\
e^t,&a=1/2,
\end{cases}
\qquad
p_n(a,0)=\sum_{j=0}^n(2a)^j.
\]
These formulas follow by integrating $e^{t(1+(2a-1)u)}$ and $(1+(2a-1)u)^n$ on $[0,1]$ and using $p_n(a,0)=(n+1)\mathbb E[Y_{a,0}^n]$. The quotient at $t=0$ is interpreted by continuity.
\end{example}

\begin{corollary}\label{cor:boundary}
For every real $a$, $p_n(a,1)=(2n+1)!!$. 
Let $V_1,V_2$ be independent chi-square variables with one degree of freedom, and let $U$ be an independent uniform variable on $[0,1]$. 
Then $\widehat Y=V_1+UV_2$ satisfies
\[
\mathbb E[e^{t\widehat Y}]=\frac{(1-2t)^{-1/2}-1}{t}\quad(t<1/2),
\qquad
\mathbb E[\widehat Y^n]=\frac{(2n+1)!!}{n+1},
\]
where the quotient at $t=0$ is $1$. 
For fixed $a$, $Y_{a,z}$ converges in distribution to $\widehat Y$ as $z\to1^-$, and all its moments converge. 
In particular, the boundary sequence is Stieltjes for every real $a$.
\end{corollary}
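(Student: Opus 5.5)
The plan has four steps: get $p_n(a,1)$ from \cref{cor:recurrence}, compute the law of $\widehat Y$ directly, obtain moment convergence from polynomial continuity in $z$, and upgrade it to convergence in distribution by a moment generating function argument.

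\textbf{Step 1: the value $p_n(a,1)$.} I will use \cref{cor:recurrence}, which holds for all real $a,z$. At $z=1$ we have $w=0$.
\begin{itemize}
\item For $n\ge1$ the term $(2aw)^n$ vanishes.
\item In the sum, the factor $w^{n-j-1}$ vanishes unless $j=n-1$, where it equals $w^0=1$ by the polynomial convention.
\end{itemize}
The recurrence therefore reduces to
\[
p_n(a,1)=p_{n-1}(a,1)+\binom{n}{n-1}2\,p_{n-1}(a,1)=(2n+1)p_{n-1}(a,1).
\]
Together with $p_0=1$, induction gives $p_n(a,1)=(2n+1)!!$, independently of $a$.

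\textbf{Step 2: the law of $\widehat Y$.} A chi-square variable with one degree of freedom has moment generating function $(1-2t)^{-1/2}$ for $t<1/2$. By independence and Tonelli's theorem, exactly as in the proof of \cref{thm:moment},
\[
\mathbb E[e^{t\widehat Y}]=(1-2t)^{-1/2}\int_0^1(1-2tu)^{-1/2}\,du=(1-2t)^{-1/2}\cdot\frac{1-(1-2t)^{1/2}}{t},
\]
which simplifies to the stated formula. To read off the moments, expand $(1-2t)^{-1/2}=\sum_{n\ge0}(2n-1)!!\,t^n/n!$, using $2^n(1/2)_n=(2n-1)!!$. Subtracting $1$ and dividing by $t$ gives $\sum_{n\ge0}(2n+1)!!\,t^n/(n+1)!$. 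Since $\widehat Y\ge0$ has a finite exponential moment near $0$, the coefficient comparison is justified as in \cref{sec:proof}. This yields $\mathbb E[\widehat Y^n]=(2n+1)!!/(n+1)$.

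\textbf{Step 3: convergence of moments as $z\to1^-$.} By \cref{thm:moment}, $\mathbb E[Y_{a,z}^n]=p_n(a,z)/(n+1)$. The right-hand side is a polynomial in $z$, so it tends to $p_n(a,1)/(n+1)=(2n+1)!!/(n+1)=\mathbb E[\widehat Y^n]$ by Steps 1 and 2.

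\textbf{Step 4: convergence in distribution.} I expect this step to be the main, though mild, obstacle. I would first try moment generating functions. For fixed $t<1/2$, the interval $I_z$ eventually contains $t$, because $\log(1/z)/(2w)\to1/2$. Then
\[
1-ze^{2wt}=w(1-2t)+O(w^2),
\]
so $F_z(2tu)\to(1-2tu)^{-1/2}$ uniformly for $u\in[0,1]$ when $|t|$ is small. Also $e^{2(a-1)wtu}\to1$. Hence the integral formula in the proof of \cref{thm:moment} converges to the formula of Step 2 on a neighborhood of $0$. Pointwise convergence of moment generating functions on an open interval around $0$ implies convergence in distribution.

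As an alternative, convergence of all moments (Step 3) together with determinacy of the limit also gives convergence in distribution. Determinacy holds because $\widehat Y$ has a finite exponential moment, by the same argument as in \cref{cor:stieltjes}.

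Finally, $\widehat Y\ge0$ represents the boundary moments $p_n(a,1)/(n+1)$. So the boundary sequence is Stieltjes for every real $a$, even though $Y_{a,z}$ may take negative values when $a<0$.
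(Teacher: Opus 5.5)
Your proposal is correct and follows essentially the paper's route: the recurrence of \cref{cor:recurrence} at $z=1$, the chi-square moment generating function computation, moment convergence from polynomial continuity in $z$, and a moment-generating-function argument for the distributional limit. The only difference is in that last step, where you show convergence of the moment generating function of $Y_{a,z}$ itself via the integral formula. The paper instead shows it for the components $X_i=(1-z)(2N_i+1)$ and then uses independence with $U$ and the continuous mapping theorem. Either works, as does your alternative via moment convergence plus determinacy of the limit.
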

\begin{proof}
At $z=1$, the term $(2aw)^n$ vanishes and only the summand with $j=n-1$ survives, so \cref{cor:recurrence} gives $p_n(a,1)=(2n+1)p_{n-1}(a,1)$. 
Thus $\mathcal P(a,1;t)=(1-2t)^{-1/2}-1$ near $0$. 
A chi-square variable with one degree of freedom has gamma shape $1/2$, scale $2$, and moment generating function $(1-2t)^{-1/2}$. 
Consequently,
\[
\mathbb E[e^{t\widehat Y}]=(1-2t)^{-1/2}\int_0^1(1-2tu)^{-1/2}\,du
=\frac{(1-2t)^{-1/2}-1}{t}. 
\]
As $z\to1^-$, the moment generating functions of $X_i=(1-z)(2N_i+1)$ converge to $(1-2t)^{-1/2}$ on a neighborhood of $0$. 
Independence yields joint convergence with $U$, and $2(a-1)(1-z)\to0$. 
The continuous mapping theorem gives the distributional limit. Moment convergence follows separately from the polynomial identity $\mathbb E[Y_{a,z}^n]=p_n(a,z)/(n+1)$ and the displayed limiting moments.
\end{proof}

\begin{remark}\label{rem:hurwitzvalues}
For $a>1/2$, define
\[
\zeta_{HCB}(s,a)\coloneqq\sum_{n=0}^{\infty}
\frac{1}{\binom{2(n+a)}{n+a}(n+a)^s}, 
\qquad
\binom{2v}{v}\coloneqq\frac{\Gamma(2v+1)}{\Gamma(v+1)^2}.
\]
The incomplete beta formula of Ikeda and Kadono \cite{IkedaKadono2026} gives, for integers $k\ge1$,
\[
\zeta_{HCB}(1-k,a)
=C_a\left\{\frac{2k}{3}\mathbb E[Z_a^{k-1}]
+\frac{4^a}{2\sqrt3}B_{1/4}\left(a-\frac12,\frac12\right)\mathbb E[T^k]\right\},
\]
where $C_a=(2a-1)\Gamma(a+1)^2/(a\Gamma(2a+1))$ and
$T=N_1+1/2$, with $N_1$ as in \cref{cor:alpha}.
Indeed, in their formula substitute
\[
\left(\frac23\right)^kp_{k-1}\left(a,\frac14\right)
=\frac{2k}{3}\mathbb E[Z_a^{k-1}],
\qquad
\left(\frac23\right)^kq_{k-1}\left(\frac14\right)=\mathbb E[T^k].
\]
The first substitution is \cref{cor:alpha}, and the second follows from $\mathbb E[e^{tX_1}]=F_z(2t)=\mathcal Q(z;t)$ in \cref{sec:proof} and \cref{thm:generating}, since $X_1=(3/2)T$ at $z=1/4$. 
This identifies the polynomial contribution as a scaled moment, without asserting rationality for general $a$. 
For a positive integer $a=m$, the shift identity for $\zeta_{HCB}$ in the same paper gives its rational part in the decomposition into $\mathbb Q+\mathbb Q\pi/\sqrt3$ as
\[
\frac{k}{3}\mathbb E[Z_1^{k-1}]
-\sum_{j=1}^{m-1}\frac{j^{k-1}}{\binom{2j}{j}}.
\]
Indeed, $\zeta_{HCB}(1-k,m)=\zeta_{CB}(1-k)-\sum_{j=1}^{m-1}j^{k-1}/\binom{2j}{j}$. 
For $m>1$, the incomplete beta term can also contribute to the rational part.
\end{remark}

\section*{Acknowledgements}
The authors thank Professor Fumio Hiroshima for inviting the first-named author to speak at the workshop ``Zeta Functions and Non-commutativity'' at Kyushu University. 
This work originated from that talk.
The first author is supported by JSPS KAKENHI Grant Number JP26K24491. 

\section*{Declaration on the use of generative AI}
During the preparation of this manuscript, the authors used the generative AI tools ChatGPT (OpenAI) and Claude (Anthropic) for language editing, improving the clarity of the presentation, and assisting in checking mathematical arguments. 
All mathematical arguments, statements, and conclusions were independently verified by the authors, who take full responsibility for the content of the manuscript.

\bibliographystyle{amsplain}
\bibliography{References}

\begin{bibdiv}
\begin{biblist}

\bib{AdellBenyi2024}{article}{
      author={Adell, Jos{\'e}~A.},
      author={B{\'e}nyi, Be{\'a}ta},
       title={Probabilistic {S}tirling numbers and applications},
        date={2024},
     journal={Aequationes Mathematicae},
      volume={98},
      number={6},
       pages={1627\ndash 1646},
}

\bib{AdellLekuona2019}{article}{
      author={Adell, Jos{\'e}~A.},
      author={Lekuona, Alberto},
       title={Explicit expressions and integral representations for the {S}tirling numbers. {A} probabilistic approach},
        date={2019},
     journal={Advances in Difference Equations},
      volume={2019},
       pages={Paper No. 398},
}

\bib{Barry2018}{misc}{
      author={Barry, Paul},
       title={The {$1/k$}-{E}ulerian polynomials as moments, via exponential {R}iordan arrays},
        date={2018},
        note={\href{https://arxiv.org/abs/1803.10308}{arXiv:1803.10308}},
}

\bib{BenyiMatsusaka2023}{article}{
      author={B{\'e}nyi, Be{\'a}ta},
      author={Matsusaka, Toshiki},
       title={Remarkable relations between the central binomial series, {E}ulerian polynomials, and poly-{B}ernoulli numbers, leading to {S}tephan's observation},
        date={2023},
     journal={Kyushu Journal of Mathematics},
      volume={77},
      number={1},
       pages={149\ndash 158},
      eprint={2207.00205},
}

\bib{BorweinBroadhurstKamnitzer2001}{article}{
      author={Borwein, Jonathan~M.},
      author={Broadhurst, David~J.},
      author={Kamnitzer, Joel},
       title={Central binomial sums, multiple {C}lausen values, and zeta values},
        date={2001},
     journal={Experimental Mathematics},
      volume={10},
      number={1},
       pages={25\ndash 34},
}

\bib{Charalambides1991}{article}{
      author={Charalambides, Charalambos~A.},
       title={On a generalized {E}ulerian distribution},
        date={1991},
     journal={Annals of the Institute of Statistical Mathematics},
      volume={43},
      number={1},
       pages={197\ndash 206},
}

\bib{IkedaKadono2026}{article}{
      author={Ikeda, Karin},
      author={Kadono, Yuta},
       title={Hurwitz--{L}erch type central binomial series},
        date={2026},
     journal={The Ramanujan Journal},
      volume={69},
      number={3},
       pages={Paper No. 61},
      eprint={2409.16026},
}

\bib{Lehmer1985}{article}{
      author={Lehmer, D.~H.},
       title={Interesting series involving the central binomial coefficient},
        date={1985},
     journal={The American Mathematical Monthly},
      volume={92},
      number={7},
       pages={449\ndash 457},
}

\bib{LiangMuWang2016}{article}{
      author={Liang, Huyile},
      author={Mu, Lili},
      author={Wang, Yi},
       title={Catalan-like numbers and {S}tieltjes moment sequences},
        date={2016},
     journal={Discrete Mathematics},
      volume={339},
      number={2},
       pages={484\ndash 488},
}

\bib{Matsusaka2026}{misc}{
      author={Matsusaka, Toshiki},
       title={Poly-{B}ernoulli numbers from shifted log-sine integrals},
        date={2026},
        note={\href{https://arxiv.org/abs/2603.25392}{arXiv:2603.25392}},
}

\bib{SavageViswanathan2012}{article}{
      author={Savage, Carla~D.},
      author={Viswanathan, Gopal},
       title={The {$1/k$}-{E}ulerian polynomials},
        date={2012},
     journal={The Electronic Journal of Combinatorics},
      volume={19},
      number={1},
       pages={Paper No. P9},
}

\bib{Sun2007}{article}{
      author={Sun, Ping},
       title={Moment representation of {B}ernoulli polynomial, {E}uler polynomial and {G}egenbauer polynomials},
        date={2007},
     journal={Statistics \& Probability Letters},
      volume={77},
      number={7},
       pages={748\ndash 751},
}

\end{biblist}
\end{bibdiv}

\end{document}